\newtheorem{theorem}{Theorem}[section]
\newtheorem{lemma}[theorem]{Lemma}
\theoremstyle{definition}
\newtheorem{definition}[theorem]{Definition}
\theoremstyle{remark}
\newtheorem{remark}[theorem]{Remark}  
\numberwithin{equation}{section}
\begin{document}
\setcounter{page}{1}

\title[Heat equations driven by mixed local-nonlocal operators]{Fujita-type results for the semilinear heat equations driven by mixed local-nonlocal operators}

\author[V. Kumar]{Vishvesh Kumar}
\address{
  Vishvesh Kumar:
  \endgraf
  Department of Mathematics: Analysis, Logic and Discrete Mathematics
  \endgraf Ghent University \endgraf Krijgslaan 281, Building S8,	B 9000 Ghent, Belgium
  \endgraf
  and 
  \endgraf
  Department of Mathematical Sciences \endgraf
		Indian Institute of Technology (BHU)  \endgraf
		Varanasi, Uttar Pradesh, 221005, India. \endgraf
  {\it E-mail address} {\rm  Vishvesh.Kumar@UGent.be/vishveshmishra@gmail.com}
  }

  \author[B. T. Torebek ]{Berikbol T. Torebek} \address{Berikbol T. Torebek  \endgraf  Institute of
Mathematics and Mathematical Modeling \endgraf 28 Shevchenko str.,
050010 Almaty, Kazakhstan.} \email{berikbol.torebek@ugent.be}

\dedicatory{Dedicated to Prof. Hiroshi Fujita on the occasion of 60 years of ``Fujita critical exponent"}

\subjclass[2020]{35K58, 35B33, 35A01, 35B44}

\keywords{mixed local-nonlocal operator; critical exponents; heat equation; global existence; blow-up}
\date{\today}
\thanks{The authors are supported by the FWO Odysseus 1 grant G.0H94.18N: Analysis and Partial Differential Equations, the Methusalem programme of the Ghent University Special Research Fund (BOF) (Grant number 01M01021). VK is also supported by  FWO Senior Research Grant G011522N. BT is also supported by the Science Committee of the Ministry of Education and Science of the Republic of Kazakhstan (Grant No. BR31714735).}

\begin{abstract} This paper explores the critical behavior of the semilinear heat equation $u_t+\mathcal{L}_{a, b}u=|u|^p+f(x)$, considering both the presence and absence of a forcing term  $f(x).$ The mixed local-nonlocal operator $\mathcal{L}_{a, b}=-a\Delta+b(-\Delta)^s,\,a,\,b \in \mathbb{R}_+,$ incorporates both local and nonlocal Laplacians. We determine the Fujita-type critical exponents by considering the existence or nonexistence of global solutions. Interestingly, the critical exponent is determined by the nonlocal component of the operator and, as a result, coincides with that of the fractional Laplacian. 

In the case without a forcing term, our results improve upon recent findings by Biagi et al. [Bull. London Math. Soc.  57 (2025), 265--284] and Del Pezzo et al. [Nonlinear Analysis 255 (2025), 113761]. When a forcing term is included, our results refine those of Wang et al. [J. Math. Anal. Appl., 488 (1) (2020), 124067] and complement the work of Majdoub [La Matematica, 2 (2023), 340–361].
 
\end{abstract} \maketitle

\tableofcontents
\allowdisplaybreaks
\section{Introduction and main results}
\subsection{Introduction} This paper is concerned with  the  semilinear heat equations driven by the {\it mixed  local-nonlocal} operator $\mathcal L_{a,b}$ of the following type:
\begin{equation} \label{main}
 \begin{cases}
 u_t + \mathcal L_{a,b} u = |u|^p + f(x) & \text{in $\mathbb{R}^d\times(0,+\infty)$} \\
 u(x,0) = u_0(x) & \text{in $\mathbb{R}^d$},
 \end{cases}
\end{equation}
where $p>1,$ $u_0(x)$ is a given initial data, and $f(x)$ is a forcing term. Our primary objective is to determine the critical exponent \eqref{main} by analyzing the blow-up and global existence of solutions, both with and without a forcing term. In addition to this, we will also investigate the existence and uniqueness  of the local solutions. 

The operator $$\mathcal L_{a,b} = -a\Delta+b(-\Delta)^s,$$  where $a,b\in\mathbb{R}_+,$ and $(-\Delta)^s$ stand for the (nonlocal) fractional Laplacian of order $s\in (0,1)$:
\begin{align*}
(-\Delta)^s v(x) & = C_{d,s}\cdot \mathrm{P.V.}\int_{\mathbb{R}^d}\frac{v(x)-v(y)}{|x-y|^{d+2s}}\,dy
\\
& = C_{d,s}\cdot\lim_{\varepsilon\to 0^+}\int_{\{|x-y|\geq\varepsilon\}}\frac{u(x)-u(y)}{|x-y|^{d+2s}}\,dy,\,C_{d,s}=\frac{2^{2s-1}{2s}\Gamma\left(\frac{d+2s}{2}\right)}{\pi^{d/2}\Gamma(1-s)},
\end{align*}
has been studied intensively in recent years in several different contexts by many prominent researchers. This is due to its numerous applications in various areas of mathematics, such as probability theory and mathematical biology. Specifically, in probability theory, the mixed operator arises from the superposition of a classical (that is, Brownian) stochastic process and a long-range (that is, L\'evy) stochastic process. In mathematical biology, mixed operators are used to model optimal animal foraging strategies. For further details, we refer to \cite{Chen1, Chen2, Dip1, Dip2, Dip3, Dip4, Filip} and the references therein.

In the classical paper  \cite{Fujita}, Fujita considered the problem \eqref{main} for $$f\equiv 0\,\,\, \text{and} \,\,\,\mathcal{L}_{1,0}=-\Delta,$$ and proved that: for any $u_0\geq 0$ the problem \eqref{main} possesses no global positive solutions if $$1<p<p_F=1+\frac{2}{d},$$ while there exists a positive global  solution of \eqref{main} if $p>p_F$ and $u_0\geq 0$ is smaller than a Gaussian. Later, Hayakawa \cite{Hayakawa} and Sugitani \cite{Sugitani} established that the $p=p_F$ also belongs to case blow-up case for nonnegative initial data. The number $p_F=1+\frac{2}{d}$ is called the {\it Fujita (critical) exponent}. In   \cite{Wes81}, Weissler proved that there exists a global solution of he problem \eqref{main} for $f\equiv 0$ and $\mathcal{L}_{1,0}=-\Delta,$ if $p>p_F$ and $\Vert u_0\Vert_{L^{q_c}(\mathbb{R}^d)}$ is sufficiently small with $$q_c=\frac{d(p-1)}{2}>1.$$ Zhang in \cite{Zhang} extended Fujita's results, by replacing $u_0\geq 0$ with $$\int_{\mathbb{R}^d}u_0(x)dx>0,$$ and proved that the Fujita's problem possesses no global sign-changing solutions if $p\leq p_F$. Later, Pinsky \cite{Pinsky} showed that Fujita's problem has no globally sign-changing solutions when $p\leq p_F$ and $$\int_{\mathbb{R}^d}u_0(x)dx=0,\,u_0\not\equiv 0.$$

On the other hand, many authors studied the problem \eqref{main} for $f\equiv 0$ and for the nonlocal fractional Laplacian $\mathcal{L}_{0,1}=(-\Delta)^s,\,s\in (0,1);$ we refer to  \cite{Fino, Kirane1, Naga, QS07, Sugitani} and reference therein for more details. It was observed in these studies that the critical exponent for the problem \eqref{main} for $f\equiv 0$ and  $\mathcal{L}_{0,1}$ in the sense of Fujita is $p_F:=1+\frac{2s}{d}.$

 Bandle et al. \cite{BLZ00} considered the problem \eqref{main} for $\mathcal{L}_{1,0}=-\Delta,$ and found a new critical exponent in the form $$p_c:=\left\{\begin{array}{cc}
   \infty,  & d=1,2, \\
   \frac{d}{d-2},  & d\geq 3.
\end{array}\right.$$ It was shown that if $1<p\leq p_c$ and $\int_{\mathbb{R}^d}f(x)dx>0,$ then there exists no global solutions, while if $p> p_c,$ then the problem has global solutions. Note that similar results for $f\geq 0$ were obtained a little earlier in \cite{Pinsky1, Zhang1, Zhang3, Zhang4}. Some further extensions and improvements of these results can be found in \cite{Torebek1, Falc, Jleli, Torebek2}.\\

In \cite{Wang}, the authors
studied the critical behavior of problem \eqref{main} with $\mathcal{L}_{0,1}=(-\Delta)^s,\,s\in (0,1),$ and
proved that $p_c=\frac{d}{d-2s}$ is the critical exponent  for $u_0\geq 0$ and $f\geq 0.$
Recently, in \cite{Majdoub} Majdoub studied the critical behavior of the problem \eqref{main} with $\mathcal{L}_{0,1}=(-\Delta)^s,\,s\in (0,1),$ and proved the existence of global (not necessarily positive) solutions for $p>\frac{d}{d-2s}.$ However, the behavior of sign-changing solutions for $p\leq \frac{d}{d-2s}$ has not been studied and was left open due to the technical shortcomings of the method used.

Recently, Biagi, Punzo, and Vecchi  \cite{Biagi} studied the problem \eqref{main} with $\mathcal{L}_{1,1}$ and $f\equiv 0$ and found the Fujita-type critical exponent $1+\frac{2s}{d}$ of the existence/non-existence of a positive global solution for $u_0\geq 0$. The same results, but for $\mathcal{L}_{a,b}$ were obtained by Del Pezzo and Ferreira in \cite{Pezzo}.
In \cite{Fino}, the authors obtained the asymptotic behavior of the solutions of the problem \eqref{main}, where instead of $|u|^p$ were considered $-h(t)u^p.$

Motivated by the above remarkable results, this paper investigates problem \eqref{main}, focusing on the critical behavior of sign-changing solutions. Specifically, when $f\equiv 0,$ we aim to extend the results of Biagi et al. \cite{Biagi} and Del Pezzo and Ferreira \cite{Pezzo} by removing the assumption $u_0\geq 0$. When $a>0, b\geq 0$ and $f\not\equiv 0$, our goal is to determine the critical exponent without assuming $u_0, f\geq 0$. This, in a particular case, generalizes the results of Wang and Zhang \cite{Wang} and complements the findings of Majdoub \cite{Majdoub}.

\subsection{Main results}\label{mr}\setcounter{equation}{0} We now present our main findings in detail. To this end, we first introduce the notion of a solution to the Cauchy problem \eqref{main}. Specifically, we consider two standard types of solutions, namely, weak and mild solutions, which are commonly used in the study of parabolic equations. This definition is motivated by the notions of weak and mild solutions in \cite{Biagi} for the case $f \equiv 0$, and we recommend it for further discussion. Here we mention that the authors of \cite{Biagi} refer to weak solutions as very weak solutions.

\begin{definition} \label{def:solWeakMild}
 Let $u_0\in L^1_{loc}(\mathbb R^d)\cap C_0(\mathbb R^d),$ $f\in L^1_{loc}(\mathbb R^d)\cap C_0(\mathbb R^d),$ and let $1\leq  p<+\infty$.
 \begin{itemize}
  \item[1)] (\textbf{Weak solution}) We say that a function $u$
 is a \textbf{weak solution} to problem \eqref{main} if $u\in L^p_{\mathrm{loc}}([0,+\infty)\times\mathbb{R}^d)$ and satisfies the following integral identity:
\begin{equation} \label{eq:weakSoldef}\begin{split}
\int_{0}^\infty\int_{\mathbb{R}^d}[|u|^p+f(x)]\varphi\,dx\,dt&+\int_{\mathbb R^d}u_0(x)\varphi(x,0)\,dx\\&= \int_{0}^\infty\int_{\mathbb{R}^d}u(-\varphi_t+\mathcal L_{a,b}\varphi)\,dx\,dt,\end{split}
  \end{equation} for any $\varphi\in C^\infty_0((0,\infty)\times\mathbb{R}^d).$
 \item[2)] (\textbf{Mild solution})
 We say that a function $u$
 is a
 \textbf{mild solution} to problem \eqref{main} if $u\in C([0,+\infty)\times\mathbb{R}^d)\cap L^\infty((0,+\infty)\times\mathbb{R}^d)$ and satisfies the following integral equation
  \begin{equation} \label{eq:mildSoldef}
   u(x,t) = \int_{\mathbb R^d}\mathcal{P}_t(x-y)u_0(y)\,dy
  + \int_0^t\int_{\mathbb R^d}\mathcal{P}_{t-\tau}(x-y)(|u(y,\tau)|^p+f(y))\,dy\,d\tau.
  \end{equation}
 \end{itemize}
\end{definition}

With the notion of solutions established, we are now ready to state our first main result on the existence and nonexistence of sign-changing solutions to \eqref{main} with  $f(x)\equiv 0.$ 

\begin{theorem} \label{thm:Main}
 Let $a,b>0,$ $s \in (0, 1),$ $f(x)\equiv 0,$ $u_0\in L^1_{loc}(\mathbb R^d),$ and let $1< p<\infty$.\\
 Then, the following facts hold.
 \begin{itemize}
  \item[(i)] If $$1< p\leq {p}_F= 1+\frac{2s}{d}\,\,\, \text{and}\,\,\,\int_{\mathbb{R}^d}u_0(x)dx>0,$$ 
then the Cauchy problem \eqref{main} admits no global weak solution.
  \vspace*{0.1cm}
  \item[(ii)] If $$1< p\leq {p}_F\,\,\, \text{and}\,\,\,\int_{\mathbb{R}^d}u_0(x)dx=0,\,u_0\not\equiv 0,$$ 
then the Cauchy problem \eqref{main} admits no global weak solution.
  \vspace*{0.1cm}
  \item[(iii)]Let $a=b=1$. If \,$p > {p}_F$, and $u_0 \in C_0(\mathbb{R}^{d}) \cap L^{p_c^s}(\mathbb{R}^{d}),$  where
  \begin{equation}
      p_c^s:=\frac{d(p-1)}{2s},
  \end{equation}
  with $\Vert u_0 \Vert_{L^{p_c^s}(\mathbb{R}^d)}$ sufficiently small, then the solution $u$ of \eqref{main}   exists globally. 
 \end{itemize}
\end{theorem}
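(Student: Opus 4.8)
The plan is to handle the three parts by distinct mechanisms: the nonexistence statements (i) and (ii) via the rescaled weak-formulation (test-function) method, and the global existence (iii) via a Kato-type fixed-point argument built on the smoothing estimates of the semigroup generated by $\mathcal L_{1,1}$. The decisive structural fact, which explains why the critical exponent is the \emph{fractional} one $p_F=1+\tfrac{2s}{d}$, is that under the parabolic rescaling $x\mapsto Rx$, $t\mapsto R^{2s}t$ adapted to $(-\Delta)^s$ one has $\partial_t\varphi_R=O(R^{-2s})$ and $(-\Delta)^s\varphi_R=O(R^{-2s})$, whereas $\Delta\varphi_R=O(R^{-2})$; since $2s<2$, the local term is negligible at large scales and $\mathcal L_{a,b}$ effectively behaves like $b(-\Delta)^s$. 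This dictates the scaling used in (i).

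For (i), assume toward a contradiction that a global weak solution $u$ exists and insert into \eqref{eq:weakSoldef} (with $f\equiv0$) a test function of product form $\varphi_R(x,t)=\Phi(x/R)^\ell\,\psi(t/R^{2s})^\ell$ with $\ell$ large and fixed profiles satisfying $\Phi(0)=\psi(0)=1$. Writing $u(-\partial_t\varphi_R+\mathcal L_{a,b}\varphi_R)=\big(|u|\varphi_R^{1/p}\big)\big(\varphi_R^{-1/p}|{-\partial_t\varphi_R+\mathcal L_{a,b}\varphi_R}|\big)$ and applying Young's inequality with exponents $p,\ p'=\tfrac{p}{p-1}$ absorbs $\tfrac12\int\!\int|u|^p\varphi_R$ on the left, leaving the task of bounding
$$J_R:=\int_0^\infty\!\!\int_{\mathbb R^d}\varphi_R^{-1/(p-1)}\,\big|{-\partial_t\varphi_R+\mathcal L_{a,b}\varphi_R}\big|^{p'}\,dx\,dt .$$
The scaling above gives $J_R\le C\,R^{-2sp'}\cdot R^{d+2s}=C\,R^{\,d+2s-2sp'}$, and an elementary computation yields $d+2s-2sp'\le0\iff p\le p_F$, with equality exactly at $p=p_F$. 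Combined with the Young bound, the weak identity forces $\int_{\mathbb R^d}u_0(x)\,\Phi(x/R)^\ell\,dx\le C\,J_R$. In the subcritical case $p<p_F$ the exponent is negative, so $J_R\to0$, while the left side tends to $\int_{\mathbb R^d}u_0\,dx>0$ as $R\to\infty$ by dominated convergence, a contradiction. In the critical case $p=p_F$, $J_R$ is merely bounded; here one first deduces $\int_0^\infty\!\int_{\mathbb R^d}|u|^p<\infty$ by monotone convergence ($\varphi_R\uparrow1$ pointwise), and then refines the Hölder estimate of the right-hand side by integrating the weight only over the region where $-\partial_t\varphi_R+\mathcal L_{a,b}\varphi_R$ is large. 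Absolute continuity of $\int\!\int|u|^p$ drives this remainder to $0$, again contradicting $\int u_0>0$.

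The main obstacle is the nonlocal contribution to $J_R$: since $(-\Delta)^s\varphi_R$ does not vanish outside the support of $\varphi_R$, the singular weight $\varphi_R^{-1/(p-1)}$ cannot be paired naively with a compactly supported profile, and the critical-case refinement is subtle precisely in this far-field term. I would resolve both by choosing $\Phi$ strictly positive with slow polynomial decay (e.g. $\Phi(x)=\langle x\rangle^{-\gamma}$ with $\gamma$ chosen so that $\Phi\in L^1$ and $(-\Delta)^s\Phi$ is pointwise comparable to $\Phi$), so that the $y$-integral after rescaling is finite and $J_R$ scales as claimed; the elementary local estimates dispatch $\partial_t\varphi_R$ and the subdominant $\Delta\varphi_R$. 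Once (i) is established, part (ii) follows by a time-translation together with mass monotonicity: since $\Delta$ and $(-\Delta)^s$ both annihilate constants, testing against an approximation of the indicator of $[0,t_0]$ gives, in the weak sense, $\int_{\mathbb R^d}u(\cdot,t_0)\,dx=\int_{\mathbb R^d}u_0\,dx+\int_0^{t_0}\!\int_{\mathbb R^d}|u|^p$. With $\int u_0=0$ and $u_0\not\equiv0$ (whence $u\not\equiv0$ for small $t$), the right side is strictly positive, so the shifted solution $v(\cdot,t)=u(\cdot,t+t_0)$ solves the same equation with initial datum of positive mass; applying (i) to $v$ yields nonexistence.

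Finally, for (iii) with $a=b=1$ and $p>p_F$, I would run a Banach fixed-point argument for the mild formulation \eqref{eq:mildSoldef} (with $f\equiv0$) in a space $X=\{u:\ \sup_{t>0}t^{\theta}\|u(t)\|_{L^r(\mathbb R^d)}\le M\}$ for a suitable $r>p_c^s$ and $\theta=\tfrac{d}{2s}\big(\tfrac{1}{p_c^s}-\tfrac1r\big)$. The essential inputs are the $L^q$--$L^r$ smoothing bounds $\|\mathcal P_t*g\|_{L^r}\le C\,t^{-\frac{d}{2s}(\frac1q-\frac1r)}\|g\|_{L^q}$ for the kernel of $e^{-t\mathcal L_{1,1}}$, whose large-time decay is governed by the fractional part (as established in \cite{Biagi}); the hypothesis $p>p_F$ is exactly what makes the resulting Duhamel time-integrals convergent, and smallness of $\|u_0\|_{L^{p_c^s}}$ closes the contraction, producing a global mild solution in $C([0,\infty)\times\mathbb R^d)\cap L^\infty((0,\infty)\times\mathbb R^d)$.
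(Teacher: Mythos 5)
Your overall architecture matches the paper: part (i) is proved there by exactly your rescaled test-function scheme with $T=R^{2s}$, and part (iii) is exactly the paper's Kato-type fixed point (the paper reuses the proof of Theorem \ref{thm:Main1}(iii) with $f\equiv 0$, in the space $\{u:\sup_{t>0}t^{\rho}\Vert u(t)\Vert_{L^q}\le\delta\}$ with $\rho=\frac{1}{p-1}-\frac{d}{2qs}$, which is your $X$ up to notation), so (iii) is essentially identical and correct. In (i), your replacement of a compact cutoff by a positive algebraically decaying profile $\Phi(x)=\langle x\rangle^{-\gamma}$ is a known viable alternative for the subcritical range; the paper instead keeps $\phi=\Psi^{l}(|x|/R)$ compactly supported and controls $(-\Delta)^s\phi$ via the C\'ordoba--C\'ordoba convexity inequality of Punzo--Valdinoci, obtaining $|\mathcal L_{a,b}\phi|^{p/(p-1)}\le C(R^{-2p/(p-1)}+R^{-2sp/(p-1)})\phi^{1/(p-1)}$. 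One caveat for your variant: Definition \ref{def:solWeakMild} admits only $\varphi\in C_0^{\infty}$, and since $u$ is merely $L^p_{\mathrm{loc}}$, extending the weak identity to your non-compact test functions requires an approximation step that is not automatic.

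There are, however, two genuine gaps. First, the critical case $p=p_F$. Your absolute-continuity refinement works for the time term (where $\partial_t\varphi_R$ lives in the slab $T/2\le t\le T$), but it fails for the nonlocal term under your profile choice: you arranged $(-\Delta)^s\Phi\asymp\Phi$ precisely so that the H\"older weight $\varphi_R^{-1/(p-1)}|(-\Delta)^s\varphi_R|^{p/(p-1)}$ is spread over all of $\{|x|\lesssim R\}$ rather than concentrated on a far annulus; hence the companion H\"older factor is $\bigl(\int_0^T\int_{|x|\lesssim R}|u|^p\bigr)^{1/p}$, which tends to the \emph{full} (finite, nonzero) norm, not to zero. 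You then only get $\int\!\!\int|u|^p\le C\bigl(\int\!\!\int|u|^p\bigr)^{1/p}$, a bound but no contradiction. The paper supplies the missing mechanism with a second parameter: setting $R=\Theta\mathcal R$, $T=\mathcal{R}^{2s}$ produces the prefactor $\Theta^{-4s^2/(d+2s)}$ on the nonlocal term, and letting $\mathcal R\to\infty$ first (absolute continuity kills the time term) and then $\Theta\to\infty$ yields $\int\!\!\int|u|^p\le 0$, hence $u\equiv 0$. Second, part (ii): your reduction via the mass identity $\int u(t_0)=\int u_0+\int_0^{t_0}\!\int|u|^p$ followed by a time shift is not justified in the paper's solution class. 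Weak solutions are only $L^p_{\mathrm{loc}}$ with $u_0\in L^1_{\mathrm{loc}}$, so neither $\int_{\mathbb R^d}u(\cdot,t_0)\,dx$ nor $\int_0^{t_0}\!\int_{\mathbb R^d}|u|^p$ is known to be finite, and constants (or spatial cutoffs, whose far-field pairing with $(-\Delta)^s$ does not vanish without decay of $u$) are not admissible test functions. The paper proves (ii) directly and much more simply: it keeps the term $(1-2\varepsilon)\int\!\!\int|u|^p\varphi$ on the left of the subcritical estimate \eqref{CC}; with $\int u_0=0$, letting $R\to\infty$ forces $\int\!\!\int|u|^p\le 0$, so $u\equiv 0$, contradicting $u_0\not\equiv 0$. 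Note also that even if your mass argument were repaired, your (ii) at $p=p_F$ would still route through the critical case of (i), where the first gap bites.
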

\begin{remark} \begin{itemize}
    \item[(i)] As we have mentioned earlier, Theorem \ref{thm:Main} improves the results presented in   \cite[Theorem 3.3]{Biagi} and results in \cite[Theorem 1.3]{Pezzo}, since we do not assume the positivity of $u_0$. Furthermore, if $s=1$, then part (i) of Theorem \ref{thm:Main} coincide with the results of Zhang in \cite{Zhang}, and part (ii) of Theorem \ref{thm:Main} coincides with the results by Pinsky obtained in \cite{Pinsky}, for the classical heat equation. Also, when $s=1$, then part (iii) of Theorem \ref{thm:Main} coincides with the results of Weissler in \cite{Wes81}
    \item[(ii)]  We would like to point out that in part (iii), we have chosen $a=b=1$ solely for simplicity in the proof. However, this result holds for any $a>0, b>0$ in view of the heat kernel estimate of $\mathcal{L}_{a, b}$ given in \cite{Pezzo}.
    \item[(iii)] Unfortunately, Theorem \ref{thm:Main} does not cover case $$\int_{\mathbb{R}^d}u_0(x)dx<0.$$ Following Pinsky \cite{Pinsky}, who studied case $s=1$, we anticipate that when $\int_{\mathbb{R}^d}u_0(x)dx<0,\,p>1$, the Cauchy problem \eqref{main} admits a global solution. However, due to technical challenges, we leave this question open for future investigation.
\end{itemize}
\end{remark}
Our next results is regarding existence and nonexistence of global solution to the problem \eqref{main} with $f\not\equiv 0$.
\begin{theorem} \label{thm:Main1}
 Let $d> 2s,$ $s \in (0, 1),$  $u_0\in L^1(\mathbb R^d)\cap C_0(\mathbb{R}^d),$ $f\in L^1_{loc}(\mathbb R^d)\cap C_0(\mathbb R^d),$ and let $1< p<\infty$. Then the following facts hold.
 \begin{itemize}
  \item[(i)] If $a\geq 0, b>0,$ $$1< p< {p}_{Crit}= \frac{d}{d-2s}\,\,\, \text{and} \,\,\,\int_{\mathbb{R}^d}f(x)dx>0,$$ then the Cauchy problem \eqref{main} admits no weak global solution.
  \vspace*{0.1cm}
\item[(ii)] If $a\geq 0, b>0,$ $$1< p={p}_{Crit}= \frac{d}{d-2s}\,\,\, \text{and} \,\,\,\int_{\mathbb{R}^d}f(x)dx>0,$$ and $$\limsup_{R\rightarrow\infty}R^{-\sigma}\int_{|x|<R}f(x)\,dx>0,\,\,0<\sigma\leq d,$$  then the Cauchy problem \eqref{main} admits no global weak solution.
  \vspace*{0.1cm}
  \item[(iii)]  Let $a=b=1.$ If  
   \begin{equation} \label{eq2.15}
       p> {p}_{Crit}:=\frac{d}{d-2s},
   \end{equation}
   and 
    \begin{equation} 
         p_{c}^{s} =\frac{d(p-1)}{2s}, \quad k= \frac{p_{c}^{s}}{p}.
    \end{equation}
    Then for any $u_0 \in C_0(\mathbb{R}^{d}) \cap L^{p_{c}^{s}}(\mathbb{R}^d)$ and $0 \not\equiv  f$ with $\Vert u_0 \Vert_{L^{p_{c}^{s}}(\mathbb{R}^d)}+ \Vert f \Vert_{L^{k}(\mathbb{R}^d)}$ sufficiently small, the solution $u$ of \eqref{main}  exists globally. 
 \end{itemize}
\end{theorem}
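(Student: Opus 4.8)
The plan is to realize the global solution as a fixed point of the Duhamel map attached to \eqref{eq:mildSoldef}. Writing $S(t)$ for the semigroup generated by $-\mathcal L_{1,1}$ with kernel $\mathcal P_t$, a mild solution is a fixed point of
\[
\Phi(u)(t)=S(t)u_0+\int_0^t S(t-\tau)f\,d\tau+\int_0^t S(t-\tau)|u(\tau)|^p\,d\tau .
\]
The first step is to record the $L^q$--$L^r$ smoothing estimates for $S(t)$, which follow from the two-sided heat-kernel bounds for $\mathcal L_{a,b}$ used in \cite{Pezzo}: for $1\le q\le r\le\infty$ one has $\|S(t)g\|_{L^r}\lesssim t^{-\frac d2(\frac1q-\frac1r)}\|g\|_{L^q}$ as $t\to0^+$ and $\|S(t)g\|_{L^r}\lesssim t^{-\frac d{2s}(\frac1q-\frac1r)}\|g\|_{L^q}$ as $t\to\infty$, the \emph{nonlocal} rate governing the large-time behaviour and hence fixing the critical exponents.

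I would then set the contraction on the complete metric space
\[
X=\Big\{u:\ \|u\|_X:=\sup_{t>0}\|u(t)\|_{L^{p_c^s}}+\sup_{t>0}t^{\theta}\|u(t)\|_{L^r}<\infty\Big\},\qquad \theta=\tfrac d{2s}\big(\tfrac1{p_c^s}-\tfrac1r\big),
\]
with $r$ in the nonempty range $p_c^s<r<p\,p_c^s$. Here $p_c^s=\frac{d(p-1)}{2s}$ is the scaling-critical Lebesgue exponent for the fractional part, so the $L^{p_c^s}$ norm is invariant and $\theta$ is dictated by the nonlocal smoothing rate. For the data term, $u_0\in L^{p_c^s}$ gives $\sup_t\|S(t)u_0\|_{L^{p_c^s}}\lesssim\|u_0\|_{L^{p_c^s}}$ and, by the two regimes above, $\sup_t t^\theta\|S(t)u_0\|_{L^r}\lesssim\|u_0\|_{L^{p_c^s}}$. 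For the nonlinear term I would use $\||u|^p\|_{L^{r/p}}=\|u\|_{L^r}^p$, the smoothing estimate with exponents $(r/p,r)$, and a Beta-function integral; the critical choice of $\theta$ makes $\int_0^t(t-\tau)^{-\frac{d(p-1)}{2sr}}\tau^{-\theta p}\,d\tau\simeq t^{-\theta}$, the two constraints on $r$ being exactly what keeps both exponents integrable. The Hölder/mean-value bound $\big||u|^p-|v|^p\big|\lesssim(|u|^{p-1}+|v|^{p-1})|u-v|$ then yields the contraction factor; this is the same mechanism behind Theorem~\ref{thm:Main}(iii).

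The genuinely new point, and the main obstacle, is the stationary forcing $\int_0^t S(t-\tau)f\,d\tau$, which unlike every other term does \emph{not} decay: since $f$ is time-independent, the semigroup law gives
\[
\int_0^t S(t-\tau)f\,d\tau=\int_0^t S(\sigma)f\,d\sigma=(I-S(t))\mathcal L_{1,1}^{-1}f\ \xrightarrow[t\to\infty]{}\ \mathcal L_{1,1}^{-1}f\not\equiv0 .
\]
The crucial estimate is therefore the Hardy--Littlewood--Sobolev / Green-function bound $\|\mathcal L_{1,1}^{-1}f\|_{L^{p_c^s}}\lesssim\|f\|_{L^{k}}$ with $k=p_c^s/p$; since $\tfrac1{p_c^s}=\tfrac1k-\tfrac{2s}{d}$, this is precisely the fractional HLS inequality, valid for $1<k<\tfrac d{2s}$. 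A direct computation gives $1<k\iff p>p_{Crit}=\tfrac d{d-2s}$ (while $k<\tfrac d{2s}$ always holds), so the hypothesis \eqref{eq2.15} is exactly the condition guaranteeing that the stationary profile lives in the critical space $L^{p_c^s}$; the nonlocal tail of $\mathcal P_t$ is what supplies this bound for the mixed operator.

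Because this contribution does not decay, the weighted part of $\|\cdot\|_X$ cannot be applied to it directly, so I would run the fixed point relative to the stationary profile: write $u=U_*+v$, where $U_*$ solves the elliptic problem $\mathcal L_{1,1}U_*=|U_*|^p+f$ (solvable for small $\|f\|_{L^k}$ by the same HLS estimate, with $\|U_*\|_{L^{p_c^s}}\lesssim\|f\|_{L^k}$). Then $v$ satisfies a problem with zero source and nonlinearity $|v+U_*|^p-|U_*|^p$ vanishing at $v=0$; this remainder genuinely decays and is placed in $X$ exactly as in Theorem~\ref{thm:Main}(iii). Smallness of $\|u_0\|_{L^{p_c^s}}+\|f\|_{L^k}$ then makes $\Phi$ a contraction on a small ball of $X$, and the Banach fixed-point theorem delivers a unique global mild solution. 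I expect the bulk of the technical work to lie in (a) proving the HLS/Green bound for the \emph{mixed} operator and (b) organizing the two-parameter norm so that the non-decaying forcing is absorbed into $U_*$ while the decaying remainder closes in the critical scaling.
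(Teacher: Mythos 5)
Your proposal addresses only part (iii) of the theorem; parts (i) and (ii) are nonexistence statements and are nowhere treated. The paper proves them by the rescaled test-function method: one inserts $\varphi(t,x)=\nu^l(t/T)\,\Psi^l(|x|/R)$ into the weak formulation \eqref{eq:weakSoldef}, applies $\varepsilon$-Young inequalities, controls the nonlocal part of the cutoff via the C\'ordoba--C\'ordoba convexity inequality so that $|\mathcal L_{a,b}\phi|^{\frac{p}{p-1}}\lesssim\bigl(R^{-\frac{2p}{p-1}}+R^{-\frac{2sp}{p-1}}\bigr)\phi^{\frac{1}{p-1}}$, and then scales $T=R^{2s}$ to reach \eqref{proof3}; letting $R\to\infty$ contradicts $\int_{\mathbb R^d}f\,dx>0$ when $p<p_{Crit}$, and the critical case follows after multiplying by $R^{-\sigma}$ and invoking the limsup hypothesis. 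Since none of this appears in your write-up, the proposal is not a proof of the stated theorem, whatever happens in part (iii).

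On part (iii) your route is genuinely different from the paper's, and your diagnosis of the forcing term is correct; in fact it exposes a weak point of the paper's own argument. In \eqref{eq2.22} the smoothing exponent is $\frac{d}{2s}\bigl(\frac1k-\frac1q\bigr)=\frac{p}{p-1}-\frac{d}{2sq}=1+\rho>1$ by \eqref{eq2.16}, so the time integral there diverges at $\tau=t$ (the beta function written there has the negative argument $-\rho$), and since $\int_0^t e^{-(t-\tau)\mathcal L}f\,d\tau$ has nondecreasing $L^q$-norm for $0\le f\not\equiv0$ and tends to $\mathcal L^{-1}f\neq0$, no bound of the form $Ct^{-\rho}\Vert f\Vert_{L^k}$ can hold; the paper's map $\Lambda$ therefore does not send $\Theta$ into $\Theta$. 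Your stationary decomposition $u=U_*+v$ is the natural repair, and the elliptic step via the HLS/Green bound (with $1<k<\frac{d}{2s}$ equivalent to $p>p_{Crit}$) is sound. However, your assertion that the remainder $v$ is then "placed in $X$ exactly as in Theorem \ref{thm:Main}(iii)" conceals the real obstruction: the linearized term $|U_*|^{p-1}|v|$ is scale-critical, and its Duhamel integral carries the kernel $(t-\tau)^{-\gamma}$ with $\gamma=\frac{d}{2s}\cdot\frac{p-1}{p_c^s}=1$ exactly, in either of your two norms. For instance, with $\frac1a=\frac{p-1}{p_c^s}+\frac1r$, H\"older plus $L^a\to L^r$ smoothing yields $\int_0^t(t-\tau)^{-1}\tau^{-\theta}\,d\tau=+\infty$ (a logarithmic divergence), and replacing $r$ by $p_c^s$ gives the same exponent $1$. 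So the contraction does not close in $X$ as claimed; one needs an additional ingredient --- e.g. Lorentz (weak-Lebesgue) norms with a Meyer--Yamazaki-type estimate for the critical convolution, or positivity and comparison with a stationary supersolution --- and supplying that, together with parts (i)--(ii), is precisely what is missing from your proposal.
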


\begin{remark} \begin{itemize}
    \item[(i)] The part (i) of Theorem \ref{thm:Main1} (when $a=0$) generalizes the results of Wang and Zhang \cite{Wang} and complements the findings of Majdoub \cite[Remark 1.13]{Majdoub}. If $s=1$,
then Theorem \ref{thm:Main1} coincides with the results of Bandle, Levine, and Zhang in \cite{BLZ00}, for the classical heat equation.
\item[(ii)] Part (ii) of the Theorem \ref{thm:Main1} contains an additional constraint on $f$ in the form of $$\limsup_{R\rightarrow\infty}R^{-\sigma}\int_{|x|<R}f(x)\,dx>0,\,\,0<\sigma\leq d.$$ Unfortunately, at the moment we are unable to remove this restriction due to the method used. However, if one assumes $f\geq 0$ and studies the properties of positive solutions $u$, then using the approaches applied in \cite{BLZ00, Wang}, one can obtain the results of part (ii) without additional restrictions on $f$.
\item[(iii)] 
Of particular interest is the study of Fujita-type critical exponent in case $$\int_{\mathbb{R}^d}f(x)dx=0,\,f\not\equiv 0,$$ which is not included in Theorem \ref{thm:Main1}. Our methods do not allow us to examine this case, leaving the question open for future research. To our knowledge, Bandle et al. \cite{BLZ00} also raised a similar problem for the classical heat equation $u_t-\Delta u=|u|^p+f(x),\,x\in \mathbb{R}^d$, which also remains unsolved.\end{itemize}
\end{remark}
Below we present the last main result regarding the nonexistence of a local-in-time (instantaneous blow-up) solution. This complements the existence and uniqueness result (Theorem \ref{local}) about a local-in-time weak solution discussed in Section \ref{sec2}.
\begin{theorem}\label{thm:Main3}
Let $s \in (0, 1),$ $u_0\in L^1(\mathbb R^d)\cap C_0(\mathbb{R}^d),$ $f\in L^1_{loc}(\mathbb R^d)\cap C(\mathbb R^d),$ and let $1< p<\infty$. If $a\geq 0, b>0,$ and $$\limsup_{R\rightarrow\infty}R^{-\sigma}\int_{|x|<R}f(x)\,dx>0,\,\sigma>d,$$  then the Cauchy problem \eqref{main} admits no local in-time weak solutions for any $T>0$, that is, there is instantaneous blow-up of the local weak solution of problem \eqref{main}.
\end{theorem}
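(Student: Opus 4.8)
The plan is to prove instantaneous blow-up by a test-function (rescaled cutoff) argument, in the spirit of the weak-solution nonexistence proofs for Theorem~\ref{thm:Main1}, but now localized to an arbitrarily short time interval $[0,T]$. Suppose, for contradiction, that a local weak solution $u$ exists on $[0,T]\times\mathbb{R}^d$ for some $T>0$. Starting from the weak formulation \eqref{eq:weakSoldef} (restricted to the time window $(0,T)$), I would choose a family of nonnegative test functions $\varphi_R(x,t)=\eta(t)\,\phi_R(x)$, where $\eta\in C_0^\infty((0,T))$ (or a smooth function equal to $1$ on most of $[0,T]$ and vanishing near the endpoints, adapted to capture the initial data) and $\phi_R(x)=\Phi(x/R)$ for a fixed cutoff $\Phi$ with $\Phi\equiv 1$ on $\{|x|\le 1\}$ and $\supp\Phi\subset\{|x|\le 2\}$. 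The role of the spatial scaling parameter $R$ is to exploit the hypothesis $\limsup_{R\to\infty}R^{-\sigma}\int_{|x|<R}f(x)\,dx>0$ with $\sigma>d$.

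First I would insert $\varphi_R$ into \eqref{eq:weakSoldef} and estimate the right-hand side $\int_0^T\int_{\mathbb{R}^d}u(-\varphi_{R,t}+\mathcal{L}_{a,b}\varphi_R)\,dx\,dt$ using Young's (or H\"older's) inequality together with the standard $\varepsilon$-absorption trick: each term of the form $\int\int u\,\psi_R$ is bounded by $\tfrac12\int\int|u|^p\varphi_R + C\int\int \varphi_R^{-1/(p-1)}|\psi_R|^{p'}$, where $\psi_R$ stands for $\varphi_{R,t}$, $a\Delta\varphi_R$, or $b(-\Delta)^s\varphi_R$. This absorbs the $|u|^p$ contribution on the left into the right and leaves us with a bound on the forcing term by the rescaled integrals of the test-function derivatives. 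The key scaling computation is that $\Delta\phi_R$ scales like $R^{-2}$, $(-\Delta)^s\phi_R$ like $R^{-2s}$, and $\varphi_{R,t}$ carries only a bounded-in-$R$ factor coming from $\eta$; after integrating in $x$ over the support (of measure $\sim R^d$) and in $t$ over $(0,T)$, every derivative term on the right is of order at most $R^{d-2s}$ (the slowest-decaying being the fractional part, since $s\in(0,1)$), times a constant depending on $T$ but independent of $R$.

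Then I would compare this with the forcing contribution. On the left-hand side the term $\int_0^T\int_{\mathbb{R}^d} f(x)\varphi_R\,dx\,dt$ is, up to the fixed $\int_0^T\eta\,dt$ factor, at least $c\int_{|x|<R}f(x)\,dx$, which by hypothesis is $\gtrsim R^{\sigma}$ along a sequence $R\to\infty$. Since $\sigma>d>d-2s$, the forcing term grows strictly faster than the bound $C_T R^{d-2s}$ obtained for the derivative terms. Letting $R\to\infty$ along this sequence forces $R^{\sigma}\lesssim C_T R^{d-2s}+(\text{initial data terms})$, and the initial data term $\int_{\mathbb{R}^d}u_0(x)\varphi_R(x,0)\,dx$ is bounded by $\|u_0\|_{L^1}$, hence $O(1)$. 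This yields $R^{\sigma}\lesssim C_T R^{d-2s}+O(1)$, a contradiction as $R\to\infty$, and the contradiction is uniform in $T$ — it holds for every $T>0$ — which is exactly the instantaneous blow-up assertion.

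The main obstacle I anticipate is handling the nonlocal operator $(-\Delta)^s$ in the test-function estimates: unlike the Laplacian, $(-\Delta)^s\phi_R$ does not have compact support, so the integral $\int_{\mathbb{R}^d}\varphi_R^{-1/(p-1)}|(-\Delta)^s\varphi_R|^{p'}\,dx$ must be controlled both on the support of $\phi_R$ and in the tail region $\{|x|>2R\}$ where $\phi_R$ vanishes but $(-\Delta)^s\phi_R$ does not. To make the $\varepsilon$-absorption legitimate one typically chooses $\Phi=\Psi^{\ell}$ for a large power $\ell$ (so that $\varphi_R^{-1/(p-1)}|\mathcal{L}_{a,b}\varphi_R|^{p'}$ stays integrable and correctly scaled), and one must verify the decay estimate $|(-\Delta)^s\phi_R(x)|\lesssim R^{-2s}(1+|x|/R)^{-d-2s}$, giving an integrable tail whose total contribution still scales as $R^{d-2s}$. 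Establishing this scaling cleanly, while keeping all constants independent of $R$ and tracking the (harmless) $T$-dependence through $\eta$, is the technical heart of the argument; everything else reduces to the comparison of powers $\sigma>d-2s$ performed above.
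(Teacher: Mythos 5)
Your proposal follows essentially the same route as the paper's proof: the same rescaled test functions $\varphi_R(x,t)=\eta(t)\phi_R(x)$ with $\phi_R$ a power $\Psi^{\ell}$ of a cutoff, the same $\varepsilon$-Young absorption of the $|u|^p$ term, the same use of the C\'ordoba--C\'ordoba composition inequality to control the nonlocal tail of $(-\Delta)^s\phi_R$ by $CR^{-2s}$ times a power of $\phi_R$, and the same endgame: keep $T$ fixed, divide by $R^{\sigma}$, let $R\to\infty$, and contradict the limsup hypothesis. Indeed the paper simply recycles inequality \eqref{proof2} from the proof of Theorem \ref{thm:Main1}, divides by $TR^{\sigma}$, and uses $u_0\in L^1(\mathbb{R}^d)$ together with $\sigma>d$ to make every right-hand term vanish.

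There is, however, one concrete error in your scaling bookkeeping. The Young term produced by $\varphi_{R,t}=\eta'(t)\phi_R(x)$ carries \emph{no} spatial decay: since $\varphi_R^{-1/(p-1)}|\varphi_{R,t}|^{p/(p-1)}=|\eta'|^{p/(p-1)}\eta^{-1/(p-1)}\phi_R$, its space-time integral is of order $T^{-1/(p-1)}R^{d}$, not $R^{d-2s}$ as you assert when you claim that ``every derivative term is of order at most $R^{d-2s}$.'' Consequently the decisive comparison is $\sigma>d$ (exactly the hypothesis of the theorem), not $\sigma>d-2s$ as your closing paragraph states; your contradiction $R^{\sigma}\lesssim C_T R^{d-2s}+O(1)$ should read $R^{\sigma}\lesssim C_T R^{d}+C_T R^{d-\frac{2sp}{p-1}}+o(R^{\sigma})$, which is still a contradiction, but only because $\sigma>d$. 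Your proof of the stated theorem therefore survives as written, since you do invoke $\sigma>d>d-2s$ at the key step; yet the suggestion that the method works whenever $\sigma>d-2s$ is false: with $T$ fixed (and arbitrarily small) one cannot trade time for space as in the global case $T=R^{2s}$ of Theorem \ref{thm:Main1}(ii), the time-derivative term of size $C_T R^{d}$ dominates $R^{\sigma}$ whenever $d-2s<\sigma\le d$, and this is precisely why instantaneous blow-up is asserted only for $\sigma>d$.
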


Apart from the introduction, the organization of this paper is as follows: The next section is devoted to the theory of existence and uniqueness of local solutions to \eqref{main}. The proofs of Theorem \ref{thm:Main} and Theorem \ref{thm:Main1} will be presented in the final section.

\section{Existence, uniqueness, extension and continuity of local solutions} \label{sec2}
In this section, we examine the existence and uniqueness of local solutions to \eqref{main}.  For simplicity, the results of this section will be presented for $a=b=1;$ however, the conclusions remain valid for any $a>0, b>0.$  We also note that while some results in this section could be derived from more general semigroup theory, we provide complete proofs here for the sake of completeness.

Let us denote by $\mathcal{L},$ the operator $\mathcal{L}_{1, 1}.$  We recall the following properties of the heat semigroup $e^{-t \mathcal{L}}$ from \cite[Lemma 5]{KFA25} and \cite[Theorem 2.5]{Biagi}.  
 There exists a constant  $C>0$ such that for any $1 \leq q \leq r \leq \infty,$ we have 
\begin{equation} \label{pqbound}
    \Vert e^{-t \mathcal{L}} \varphi \Vert_{L^r( \mathbb{R}^d )} \leq C  t^{-\frac{d}{2s}\left( \frac{1}{q}-\frac{1}{r} \right)}\Vert \varphi \Vert_{L^q(\mathbb{R}^d)}, \quad t>0,
\end{equation}
for any $\varphi \in L^q(\mathbb{R}^d).$ Indeed, this estimate also hold for $\mathcal{L}_{a, b}$ in view of \cite[Equation $(8)$]{Pezzo}. In particular, we have, for $q \in [1, \infty],$ 
\begin{equation} \label{Linfbound}
    \Vert e^{-t \mathcal{L}} \varphi \Vert_{L^q( \mathbb{R}^d )} \leq C \Vert \varphi \Vert_{L^q(\mathbb{R}^d)}, \quad t>0.
\end{equation}

Let us define the local mild solution for the Cauchy problem \eqref{main}.
\begin{definition} [Local mild solution] We say that a function $u \in C([0, T], C_0(\mathbb{R}^d))$ is a
     {\it local mild solution} of the problem \eqref{main} if it satisfies
\begin{equation} \label{MS2}
    u(t)= e^{-t \mathcal{L}_{a, b}} u_0+\int_0^t e^{-(t-\tau)\mathcal{L}_{a, b}} (|u(\tau)|^p +f) d\tau,
\end{equation}
for any $t \in [0, T).$
\end{definition}
Now, we state the main result of this section. 
\begin{theorem} \label{local} Let $d > 2s, p>1, a=b=1.$ Assume that $u_0, f \in C_0(\mathbb{R}^d).$ Then the following assertions hold. 
\begin{itemize}
    \item[(i)] There exists  a unique mild solution $u \in C([0, T], C_0(\mathbb{R}^d))$ of the problem \eqref{main} for some $0<T<\infty.$
    \item[(ii)] The solution $u$ can be extended to a maximal interval $[0, T_{\max}),$ where $0<T_{\max} \leq \infty.$ Furthermore, if $T_{\max} <\infty,$ then $$ \lim_{t \rightarrow T_{\max}^{-}} \Vert u(t) \Vert_{L^{\infty}(\mathbb{R}^d)} = \infty.$$
    \item[(iii)]  Additionally, we assume that $u_0, f \in L^{r}(\mathbb{R}^d), \,\, r \in [1, \infty).$ Then, we have  
    $$ u \in C([0, T_{\max}); C_0(\mathbb{R}^d)) \cap C([0, T_{\max}); L^r(\mathbb{R}^d)).$$
    
\end{itemize}
    
\end{theorem}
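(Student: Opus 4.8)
The plan is to prove Theorem~\ref{local} via a standard fixed-point/contraction-mapping argument in the Banach space $X_T := C([0,T], C_0(\mathbb{R}^d))$, followed by a bootstrap continuation argument for the maximal-interval and regularity claims. For part (i), I would define the solution map
\begin{equation} \label{eq:Phi}
(\Phi u)(t) := e^{-t\mathcal{L}} u_0 + \int_0^t e^{-(t-\tau)\mathcal{L}}\bigl(|u(\tau)|^p + f\bigr)\, d\tau,
\end{equation}
and seek a fixed point on the closed ball $B_M := \{ u \in X_T : \|u\|_{X_T} \leq M \}$ for a suitable radius $M$ and small time $T$. The key estimates are the $L^\infty$-contractivity \eqref{Linfbound} (which gives $\|e^{-t\mathcal{L}}u_0\|_{L^\infty} \leq C\|u_0\|_{L^\infty}$) together with the elementary inequality $\bigl| |a|^p - |b|^p \bigr| \leq p\,\max(|a|,|b|)^{p-1}|a-b|$. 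Using these I would show that $\Phi$ maps $B_M$ into itself and is a contraction: schematically, for $u,v \in B_M$,
\begin{equation} \label{eq:contract}
\|\Phi u - \Phi v\|_{X_T} \leq C \int_0^t \bigl\| |u(\tau)|^p - |v(\tau)|^p \bigr\|_{L^\infty}\, d\tau \leq C\, p\, M^{p-1}\, T\, \|u-v\|_{X_T},
\end{equation}
so choosing $M := 2C(\|u_0\|_{L^\infty} + \|f\|_{L^\infty})$ and then $T$ small enough that $C p M^{p-1} T < 1/2$ yields both self-mapping and strict contraction. The Banach fixed-point theorem then delivers a unique mild solution in $X_T$. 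One technical point to verify is that $\Phi u$ actually lands in $C_0(\mathbb{R}^d)$ for each $t$ and is time-continuous; this follows from the strong continuity of the semigroup on $C_0(\mathbb{R}^d)$ and the fact that $C_0$ is preserved under the Duhamel integral, which I would check using dominated convergence.

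For part (ii), I would invoke the standard extension principle: define $T_{\max}$ as the supremum of times $T$ for which a mild solution exists, and show the solution extends past any $T < T_{\max}$. The blow-up alternative follows from the fact that the existence time $T$ in part (i) depends only on $\|u(t_0)\|_{L^\infty}$ (through $M$). Concretely, if $T_{\max} < \infty$ but $\liminf_{t \to T_{\max}^-}\|u(t)\|_{L^\infty} =: L < \infty$, then picking $t_0$ close to $T_{\max}$ with $\|u(t_0)\|_{L^\infty}$ near $L$ and restarting the contraction argument from $t_0$ produces a solution on $[t_0, t_0 + \delta]$ with $\delta$ depending only on $L$, contradicting maximality once $t_0 + \delta > T_{\max}$. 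This gives $\lim_{t \to T_{\max}^-}\|u(t)\|_{L^\infty} = \infty$.

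For part (iii), the strategy is to propagate $L^r$-regularity along the already-constructed solution. Since the semigroup is a contraction on $L^r$ by \eqref{Linfbound} (taking $q=r$ there), I would run a parallel fixed-point argument in $C([0,T], L^r(\mathbb{R}^d))$, or more efficiently, simply estimate $\|u(t)\|_{L^r}$ directly from the Duhamel formula. The nonlinear term requires controlling $\| |u(\tau)|^p \|_{L^r} = \|u(\tau)\|_{L^{pr}}^p$; here I would exploit that $u$ is already known to be bounded in $L^\infty$ on compact time subintervals of $[0,T_{\max})$, so interpolation gives $\|u(\tau)\|_{L^{pr}}^p \leq \|u(\tau)\|_{L^\infty}^{p-1}\|u(\tau)\|_{L^r}$, reducing the estimate to a linear Grönwall inequality in $\|u(\tau)\|_{L^r}$. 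A Grönwall argument then yields an a priori $L^r$-bound on every $[0,T]$ with $T < T_{\max}$, and joint continuity into $L^r$ follows from the Duhamel representation and strong continuity of the semigroup on $L^r$.

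The main obstacle I anticipate is not any single estimate but the careful verification that the constructed mild solution genuinely lies in $C_0(\mathbb{R}^d)$ (decay at infinity) \emph{and} is continuous in time up to $t=0$, including the subtle point that the Duhamel integral term is continuous at the endpoints; the heat-kernel bound \eqref{pqbound} with $q=r=\infty$ does not by itself guarantee decay, so I would need to use density of nice functions together with the mapping properties of $e^{-t\mathcal{L}}$ on $C_0$. A secondary delicate point in part (iii) is ensuring the $L^\infty$ and $L^r$ maximal existence times coincide, which the blow-up alternative from part (ii) settles, since the $L^r$ norm stays finite precisely as long as the $L^\infty$ norm does.
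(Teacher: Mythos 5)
Your proposal follows essentially the same route as the paper's proof: a Banach fixed-point argument for the Duhamel map in $C([0,T],C_0(\mathbb{R}^d))$ based on the semigroup bound \eqref{Linfbound} and the elementary difference inequality for $|u|^p$, a restart/continuation argument at a time close to $T_{\max}$ for the blow-up alternative in (ii), and the interpolation inequality $\Vert |u|^p\Vert_{L^r}\leq \Vert u\Vert_{L^\infty}^{p-1}\Vert u\Vert_{L^r}$ to propagate $L^r$-regularity in (iii). The only inessential differences are that your $\liminf$ formulation in (ii) pins down the full limit a bit more carefully than the paper's boundedness hypothesis, and that in (iii) you favor a Gr\"onwall estimate where the paper runs the fixed point in the joint space $C([0,T];C_0(\mathbb{R}^d))\cap C([0,T];L^r(\mathbb{R}^d))$ — a route you also mention as an alternative.
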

\begin{proof} To prove the existence part of  assertion  $(i),$ we define the Banach space $M,$ for any arbitrary $T>0,$ by 
$$M:= \left\{ u \in C([0, T], C_0(\mathbb{R}^d)):\, \Vert u \Vert_{L^\infty ((0, T); L^\infty(\mathbb{R}^d))} \leq 2 \delta(u_0, f) \right\},$$
where $\delta(u_0, f)= \max \{ \Vert u \Vert_{L^\infty(\mathbb{R}^d)}, \Vert f \Vert_{L^\infty(\mathbb{R}^d)}  \}.$

We equipped the space $M$ with the distance given  by the norm of  $C([0, T];\,C_0({\mathbb{R}^d}))$, such that
\begin{equation}\label{DIS}
d(u,v):=\Vert u-v\Vert_{L^{\infty}({(0,T);\, L^{\infty}(\mathbb{R}^d}))},\,\,\,u,v\in M.     
\end{equation}
Next, for $u\in M$ we define the map
\begin{equation}
    (\Psi u)(t)= e^{-t\mathcal{L}}u_0 + \int_0^te^{-(t-\tau)\mathcal{L}}(|u(\tau)|^p+f)d\tau,\,\,\,0\leq t\leq T.
\end{equation}

We apply standard techniques to prove the existence of a unique local mild solution as a fixed point of $\Psi$ using the Banach fixed point theorem.\\

Let us first show that 
 $\Psi$ maps $M$ to $M.$
We take $u\in M$, then, in view of \eqref{Linfbound}, we obtain that  estimate
\begin{align*} \Vert (\Psi u)(t)\Vert_{L^\infty(\mathbb{R}^d)} & \leq \Vert e^{-t\mathcal{L}} u_0 \Vert_{L^{\infty}({\mathbb{R}^d})} +\biggl\Vert \int_0^te^{-(t-\tau)\mathcal{L}} (|u(\tau)|^p+f)d\tau \biggr\Vert_{L^{\infty}({\mathbb{R}^d})}\\&\leq \Vert u_0 \Vert_{L^{\infty}({\mathbb{R}^d})}+T\left\Vert u\right \Vert^p_{L^\infty((0,T);\,L^{\infty}({\mathbb{R}^d}))}+ T\Vert f \Vert_{L^{\infty}({\mathbb{R}^d})}\\&\leq (1+T)\delta(u_0,f)
+2^p T \delta^p (u_0,f),
\end{align*}
holds for all $0< t\leq T$. Thus, we deduce that that 
\begin{align}\label{LL1}\Vert \Psi u\Vert_{L^\infty((0, T); L^\infty(\mathbb{R}^d))}&\leq (1+T
+2^p  \delta^{p-1}(u_0,f) T)\delta(u_0,f). \end{align}
We choose $T>0$  sufficiently small so that
\begin{equation*}\label{TM}
T+2^pT\delta^{p-1}(u_0,f)\leq1,    
\end{equation*}
and, by \eqref{LL1}, we obtain 
$$\Vert \Psi u \Vert_{L^\infty((0, T); L^\infty(\mathbb{R}^d))}\leq2 \delta(u_0,f),$$ which gives $\Psi( M)\subset M$. This proves our claim that $\Psi: M \rightarrow M.$
\\

Next, we will show that $\Psi$ is a contraction mapping. For $u,v \in M,$  we have 
\begin{align} \label{laptop1}
\Vert (\Psi u)(t)-(\Psi v)(t)\Vert_{L^\infty(\mathbb{R}^d)}&=\int_0^t e^{-(t-s)\mathcal{L}} (|u(\tau)|^p-|v(\tau)|^p)ds \nonumber \\&\leq 2^{p-1}C(p)T\delta^{p-1}(u_0,f) \Vert u-v \Vert_{L^\infty((0,T);\,L^{\infty}({\mathbb{R}^d}))},
\end{align}
where we have used the inequality
\begin{equation}\label{QWE}
 ||u|^{p}-|v|^{p}|\leq C(p)|u-v|\left(|u|^{p-1}+|v|^{p-1}\right).
\end{equation}
Therefore, by choosing $T$ sufficiently small such that 
\begin{equation*}
  2^{p-1}C(p)T\delta^{p-1}(u_0,f)<1,
\end{equation*}
we get, from \eqref{laptop1}, that
\begin{align}
\Vert (\Psi u)(t)-(\Psi v)(t)\Vert_{L^\infty(\mathbb{R}^d)}\leq C \Vert u - v \Vert_{L^\infty((0,T);\,L^{\infty}({\mathbb{R}^d}))},
\end{align}
for some positive constant $C<1.$
This shows that $\Psi$ is a contraction mapping on $M$. The Banach fixed point theorem now ensures the existence of a mild solution to \eqref{main}.

Finally, we prove the uniqueness of the solution. If $u, v$ are two mild solutions in $M$ for some $T > 0$, then using \eqref{Linfbound} and \eqref{QWE}, we obtain
\begin{align*}\Vert u(\tau)-v(\tau) \Vert_{L^{\infty}({\mathbb{R}^d})} & =\int_0^t e^{-(t-\tau)\mathcal{L}} \Vert|u(\tau)|^p-|v(\tau)|^p\Vert_{L^{\infty}({\mathbb{R}^d})}d\tau \\& \leq C(p)\int_0^t \Vert u(\tau)-v(\tau) \Vert_{L^{\infty}({\mathbb{R}^d})}d\tau,\quad 0 \leq t \leq T.
\end{align*}
This estimate, together with Gr\"{o}nwall inequality, imply the uniqueness for solution.

Next, we proceed with the proof of assertion (ii). Given the uniqueness of the mild solution, we deduce that the solution to the problem \eqref{main} can be extended over a maximal interval $ [0, T_{\max})$, where $T_{\max}$ is given by
$$
T_{\max} = \sup\left\{ t>0 \, :\,\, \eqref{MS2}\,\, \text{admits a solution \,\,} u \in C([0,t];\,C_0(\mathbb{R}^d))\right\}. 
$$
Suppose that  $T_{\max}<\infty.$ If possible, let us assume that there exists $K>0$ such that
\begin{equation}\label{MT}
\Vert u(t)\Vert_{L^\infty(\mathbb{R}^d)}\leq K,\,\,\text{for}\,\,0\leq t<T_{\max}.     
\end{equation}
 We define, for $0< \tau'< T_{\max},$ the set
$$
\mathcal{N} := \left\{v \in C([0,\tau'];\, C_0(\mathbb{R}^d):\Vert v\Vert_{ \,L^{\infty}((0,\tau');\,L^\infty(\mathbb{R}^d))}\leq 2\delta(K,f)\right\}, 
$$where $\delta(K,f)=\max\{K,\Vert f \Vert_{L^\infty(\mathbb{R}^d)}\}$. \\
Let us choose $t_*$ such that ${T_{\max}}/{2} <t_*< T_{\max}.$
For a given $v\in\mathcal{N}$, we define

$$(\Lambda v)(t):= e^{-t\mathcal{L}} u(t_*)+\int_0^t e^{-(t-\tau)\mathcal{L}}(|v(\tau)|^p+f)d\tau,\,\,0\leq t\leq\tau'.$$

Again, we equipped $\mathcal{N}$ with the distance \eqref{DIS}, and we have $\Lambda v\in C([0,\tau'];\,C_0(\mathbb{R}^d))$ by similar argument applied for $M$ and $\Psi$ in the proof of assertion $(i).$

Moreover, from \eqref{Linfbound} and \eqref{MT}, it follows that
\begin{equation}\begin{split}\label{TTT} \Vert (\Lambda v)(t)\Vert_{L^\infty(\mathbb{R}^d)} &\leq \Vert e^{-t\mathcal{L}}u(t_*)\Vert_{L^{\infty}({\mathbb{R}^d})}+\Vert \int_0^t e^{-(t-\tau)\mathcal{L}}(|v(\tau)|^p+f)d\tau \Vert_{L^{\infty}({\mathbb{R}^d})}\\& \leq \Vert u(t_*) \Vert_{L^{\infty}({\mathbb{R}^d})}+\tau' \left\Vert v \right\Vert^p_{L^\infty((0,T);\,L^{\infty}({\mathbb{R}^d}))}+ \tau' \Vert f \Vert_{L^{\infty}({\mathbb{R}^d})}\\&\leq (1+\tau')\delta(K,f)+2^p\tau'\delta^p(K,f),\end{split}\end{equation} for all $0\leq t\leq\tau'.$
Let $\tau'>0$ be a sufficiently small constant which satisfies
\begin{equation}\label{UND}
\tau' +2^p\tau' \delta^{p-1}(K,f)\leq 1.    
\end{equation}
Thus, we deduce from \eqref{TTT} that
$$\Vert \Lambda v\Vert_{L^\infty((0, \tau'), L^\infty(\mathbb{R}^d))}  \leq 2 \delta(K,f),$$
which gives us $\Lambda(\mathcal{N})\subset\mathcal{N}$. In addition, following a similar argument as before, under condition \eqref{UND}, the mapping  $\Lambda:\mathcal{N}\to\mathcal{N}$ is a contraction. By the Banach fixed point theorem, it follows that there exists a unique function  $v\in\mathcal{N}$ which satisfies
$$v(t)=e^{-t\mathcal{L}} u(t_*)+\int_0^t e^{-(t-s)\mathcal{L}}(|v(\tau)|^p+f)d\tau,\,\,0\leq t \leq \tau'.$$
At this point, for $\max\{T_{\max}/2;T_{\max}-\tau'\}<\tilde{t}<T_{\max},$ we set
\begin{equation*} \tilde{u}(t)=\left\{\begin{array}{l}
u(t)\,\,\,\text{if}\,\,\,0\leq t\leq\tilde{t}, \\{}\\
v(t-\tilde{t})\,\,\,\text{if}\,\,\,\tilde{t}\leq t\leq\tilde{t}+\tau'. \end{array}\right.\end{equation*}
Then we note that $\tilde{u}\in C([0,\overline{t}+\tau'];\,C_0(\mathbb{R}^d))$ is a solution to \eqref{MS2} and $T_{\max}<\tilde{t}+\tau'$, which contradicts the definition of $T_{\max}$. So the hypothesis \eqref{MT} does not hold. Hence, we deduce that if $T_{\max}<\infty$ then $\lim_{t\to T_{\max}^-}\Vert u(t)\Vert_{L^\infty{(\mathbb{R}^d})}=\infty$. It completes the proof.

Now, we present the proof of part (iii). By hypothesis, we have  $u_0, f\in L^r(\mathbb{R}^d)$ with $r \in [1, \infty).$ By repeating the
fixed point argument in the Banach space
\begin{equation*}\begin{split}
M_r=&\{u\in C([0, T];\,C_0({\mathbb{R}^d}))\cap C([0, T_{\max});\,L^r({\mathbb{R}^d})):\\& \quad \Vert u \Vert_{L^{\infty}({(0,T);\, L^{\infty}(\mathbb{R}^d}))}\leq 2 \delta(u_0,f),\,\Vert u \Vert_{L^{\infty}((0,T);\,L^{r}({\mathbb{R}^d}))}\leq2 \delta_r(u_0,f)\},    
\end{split}\end{equation*}
instead of $M$, where $\delta_r(u_0,f)=\max\{\Vert u_0\Vert_{L^{r}({\mathbb{R}^d})}; \Vert f \Vert_{L^{r}({\mathbb{R}^d})}\}.$
\\In addition, we endow $M_r$ with norm given by  the distance  
\begin{equation*}
d(u,v)=\Vert u-v\Vert_{L^\infty((0,T);\,L^\infty(\mathbb{R}^d))}+\Vert u-v\Vert_{L^\infty((0,T);\,L^r(\mathbb{R}^d))},\,\,\,u,v\in M_r.
\end{equation*}
Finally, applying the same argument as in the proof of the assertion $(i)$ using the inequality
$$\Vert|u(t)|^p\Vert_{L^r(\mathbb{R}^d)}\leq \Vert u(t)\Vert^{p-1}_{L^\infty(\mathbb{R}^d)}\Vert u(t)\Vert_{L^r(\mathbb{R}^d)},$$
 we obtain a unique solution $u$ in $M_r$. Therefore, we conclude that 
$$u\in C([0, T_{\max});\,C_0({\mathbb{R}^d}))\cap C([0, T_{\max});\,L^r({\mathbb{R}^d})),$$which completes the proof of the theorem.\end{proof}

Next, we will show that a mild solution is also a weak solution within our framework.
Let us first introduce the notion of a local weak solution to the Cauchy problem \eqref{main}.  
\begin{definition}[Local weak solution]\label{DWS1} Let $u_0, f\in L^1_\text{loc}(\mathbb{R}^d)$. A locally integrable function $u\in L^p_\text{loc}((0,T);\,L^p_\text{loc}(\mathbb{R}^d))$ is called a {\it local weak solution} of \eqref{main}, if the equality
\begin{equation} \label{eq:loweakSoldef}\begin{split}
\int_{0}^T\int_{\mathbb{R}^d}[|u|^p+f(x)]\varphi\,dx\,dt&+\int_{\mathbb R^d}u_0(x)\varphi(x,0)\,dx \\&= \int_{0}^T \int_{\mathbb{R}^d}u(-\varphi_t+\mathcal L_{a,b}\varphi)\,dx\,dt,\end{split}
  \end{equation} 
holds true for any function $\varphi \in C^{1}_{0}((0,T);\,C^{\infty}_{0}(\mathbb{R}^d)), \varphi\geq0$ and $\varphi (T,\cdot)=0.$

\end{definition}

\begin{lemma}\label{MWSL}
Suppose that $u_0\in C_0(\mathbb{R}^d)$ and $f\in C_0(\mathbb{R}^d).$ Consider that $u\in C([0,T];\,C_0(\mathbb{R}^d))$ is a mild solution of \eqref{main}. Then $u$ is also a weak solution to \eqref{main}.\end{lemma}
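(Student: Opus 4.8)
The plan is to verify the weak formulation \eqref{eq:weakSoldef} directly by substituting the mild-solution representation \eqref{eq:mildSoldef} into the right-hand side and moving the operator $-\varphi_t + \mathcal{L}\varphi$ onto the semigroup. First I would fix a test function $\varphi \in C^1_0((0,T); C^\infty_0(\mathbb{R}^d))$ with $\varphi \geq 0$ and $\varphi(T,\cdot)=0$, and observe that since $u \in C([0,T]; C_0(\mathbb{R}^d))$ and $u_0, f \in C_0(\mathbb{R}^d)$, the source term $g(\tau) := |u(\tau)|^p + f$ belongs to $C([0,T]; C_0(\mathbb{R}^d))$ as well; in particular all the integrals appearing below converge absolutely, since $\varphi$ is compactly supported in space and time. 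This integrability bookkeeping lets me freely apply Fubini's theorem throughout.

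The core computation is to start from $\int_0^T \int_{\mathbb{R}^d} u(-\varphi_t + \mathcal{L}\varphi)\,dx\,dt$ and substitute the mild formula $u(t) = e^{-t\mathcal{L}}u_0 + \int_0^t e^{-(t-\tau)\mathcal{L}} g(\tau)\,d\tau$. The key structural fact I would use is the self-adjointness of the semigroup, namely $\int_{\mathbb{R}^d} (e^{-\sigma \mathcal{L}}\psi)\,\chi\,dx = \int_{\mathbb{R}^d} \psi\,(e^{-\sigma\mathcal{L}}\chi)\,dx$, together with the fact that $t \mapsto e^{-t\mathcal{L}}\varphi$ solves the homogeneous equation $\partial_t w + \mathcal{L}w = 0$ with $w(0)=\varphi$. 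Transferring $\mathcal{L}$ from $\varphi$ onto $e^{-t\mathcal{L}}u_0$ and the Duhamel integrand, I would then integrate by parts in $t$ against $\varphi_t$. For the $u_0$-term the integration by parts should produce a boundary contribution at $t=0$ equal to $\int_{\mathbb{R}^d} u_0 \varphi(\cdot,0)\,dx$ (the $t=T$ boundary term vanishes because $\varphi(T,\cdot)=0$), while the operator terms cancel because $e^{-t\mathcal{L}}u_0$ is a solution of the homogeneous equation. For the Duhamel term, after an application of Fubini to reverse the order of the $t$- and $\tau$-integrations and a shift, the same cancellation of $\partial_t + \mathcal{L}$ on the semigroup leaves precisely the boundary contribution $\int_0^T \int_{\mathbb{R}^d} g(\tau)\varphi(\tau,x)\,dx\,d\tau$, which is the source term $\int_0^T\int_{\mathbb{R}^d}(|u|^p+f)\varphi\,dx\,dt$ in \eqref{eq:weakSoldef}. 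Collecting these gives exactly the weak identity.

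The main obstacle I anticipate is making the manipulations with the generator $\mathcal{L}$ rigorous: moving $\mathcal{L}$ from the (smooth, compactly supported) $\varphi$ onto $e^{-t\mathcal{L}}u_0$ requires justifying the adjoint relation $\int u\,\mathcal{L}\varphi = \int (\mathcal{L}u)\,\varphi$ in a setting where $u$ is merely continuous and bounded, and similarly justifying that $\frac{d}{dt} e^{-t\mathcal{L}}\varphi = -\mathcal{L}e^{-t\mathcal{L}}\varphi$ holds in a strong enough sense to integrate by parts. I would handle this by working with the semigroup acting on $\varphi$ rather than on $u$: since $\varphi \in C^\infty_0$ lies in the domain of $\mathcal{L}$ and the heat kernel $\mathcal{P}_t$ is smooth with good decay (by the kernel estimates underlying \eqref{pqbound}), the function $(t,x)\mapsto (e^{-t\mathcal{L}}\varphi)(x)$ is smooth and decaying, and then the self-adjointness pairing against $u_0$ and $g$ is legitimate by Fubini alone. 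The routine decay estimates needed to justify differentiating under the integral sign and applying Fubini I would only sketch, citing \eqref{pqbound} and \eqref{Linfbound}; the conceptual content is entirely the self-adjoint transfer of $-\varphi_t + \mathcal{L}\varphi$ onto the Duhamel representation.
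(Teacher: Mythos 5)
Your proposal is correct and is essentially the paper's own argument: both verify the weak identity by pairing the Duhamel representation with the test function, transferring $\mathcal{L}_{a,b}$ between $\varphi$ and the semigroup via self-adjointness, using that $e^{-t\mathcal{L}}$ solves the homogeneous equation, and invoking $\varphi(T,\cdot)=0$ after integration by parts in time. The only difference is organizational — the paper differentiates the spatial pairing $\int_{\mathbb{R}^d} u\varphi\,dx$ in $t$ and then integrates over $[0,T]$, whereas you substitute the mild formula directly into the space-time integral and collapse it — but the underlying computation is the same.
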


\begin{proof}
Take $T>0.$ Then, we multiply the identity \eqref{MS2} by $\varphi\in C^{1}_{0}((0,T);\,C^{\infty}_{0}(\mathbb{R}^d)),$ $\varphi(T,\cdot)=0$, and integrate over $\mathbb{R}^d$, to obtain
\begin{equation*}\label{MS3}\begin{split}
\int_{\mathbb{R}^d}u\varphi dx =\int_{\mathbb{R}^d}e^{-t\mathcal{L}_{a, b}}u_0\varphi dx +\int_{\mathbb{R}^d}\biggl(\int_0^te^{-(t-\tau)\mathcal{L}_{a, b}}(|u(\tau)|^p+f)d\tau\biggr)\varphi dx. 
\end{split}\end{equation*}
By differentiating the last equality with respect to the variable $t$, it follows that
\begin{equation}\label{MS4}\begin{split}
\frac{d}{dt}\int_{\mathbb{R}^d}u\varphi dx &=\int_{\mathbb{R}^d}\frac{d}{dt}\biggl(e^{-t\mathcal{L}_{a, b}}u_0\varphi \biggr)dx\\&+\int_{\mathbb{R}^d}\frac{d}{dt}\biggl[\biggl(\int_0^te^{-(t-\tau)\mathcal{L}_{a, b}}(|u(\tau)|^p+f)d\tau\biggr)\varphi \biggr]dx. 
\end{split}\end{equation}
Next, using the self-adjointness of $\mathcal{L}_{a, b}$, we deduce that
\begin{equation*}\label{MS5}\begin{split}
\int_{\mathbb{R}^d}\frac{d}{dt}\biggl(e^{-t\mathcal{L}_{a, b}}u_0\varphi \biggr)dx &=\int_{\mathbb{R}^d}(-\mathcal{L}_{a, b})e^{-t\mathcal{L}_{a, b}}u_0\varphi dx+\int_{\mathbb{R}^d}e^{-t\mathcal{L}_{a, b}}u_0\varphi_t dx
\\&=\int_{\mathbb{R}^d}e^{-t\mathcal{L}_{a, b}}u_0(-\mathcal{L}_{a, b}\varphi) dx+\int_{\mathbb{R}^d}e^{-t\mathcal{L}_{a, b}}u_0\varphi_t dx, 
\end{split}\end{equation*}
and 
\begin{equation*}\label{MS6}\begin{split}
\int_{\mathbb{R}^d}\frac{d}{dt}\biggl[\biggl(\int_0^t&e^{-(t-\tau)\mathcal{L}_{a, b}}(|u(\tau)|^p+f)d\tau\biggr)\varphi \biggr]dx\\&=\int_{\mathbb{R}^d}|u(t)|^p\varphi dx+\int_{\mathbb{R}^d}f\varphi dx\\&+\int_{\mathbb{R}^d}\biggl(\int_0^te^{-(t-\tau)\mathcal{L}_{a, b}}(|u(\tau)|^p+f)d\tau\biggr)(-\mathcal{L}_{a, b}\varphi) dx \\&+\int_{\mathbb{R}^d}\biggl(\int_0^te^{-(t-\tau)\mathcal{L}_{a, b}}(|u(\tau)|^p+f)d\tau\biggr)\varphi_t dx. 
\end{split}\end{equation*}
Therefore, taking into account \eqref{MS2} and the last identities, the equality \eqref{MS4} can be further rewritten as
\begin{equation*}\label{MS7}\begin{split}
\frac{d}{dt}\int_{\mathbb{R}^d}u\varphi dx &=\int_{\mathbb{R}^d}u(-\mathcal{L}_{a, b}\varphi) dx+\int_{\mathbb{R}^d}u\varphi_t dx+\int_{\mathbb{R}^d}|u(t)|^p\varphi dx +\int_{\mathbb{R}^d}f\varphi dx. \end{split}\end{equation*}

Finally, by integrating it in time variable over $[0,T]$ and using $\varphi(T,\cdot)=0,$ we obtain the desired result, completing the proof. \end{proof}

\section{Proof of Main results} 
\label{sec3}
This section is devoted to presenting the proof of the main results, namely, Theorem \ref{thm:Main} and Theorem \ref{thm:Main1}. We first prove Theorem  \ref{thm:Main1}. 
\subsection{Proof of Theorem \ref{thm:Main1}}\label{SubsectionP1}  The proof will be divided into three different cases.  \subsubsection{The case $p<p_{Crit}$}\label{SSS1} The proof will be given by contradiction. Suppose that there exists a non-trivial
global solution of Cauchy problem \eqref{main}. Then from the definition of weak solution \eqref{eq:weakSoldef} we have
\begin{equation}\label{proof11}\begin{split}
\int_{0}^\infty\int_{\mathbb{R}^d}&[|u|^p+f(x)]\varphi\,dx\,dt+\int_{\mathbb R^d}u_0(x)\varphi(x,0)\,dx\\&\leq {\int_{0}^\infty\int_{\mathbb{R}^d}|u||\varphi_t|\,dx\,dt}+{\int_{0}^\infty\int_{\mathbb{R}^d}|u||\mathcal L_{a,b}\varphi|\,dx\,dt}.\end{split}
  \end{equation}
The test function $\varphi$ is chosen so that
\begin{equation}\begin{split}
\int_{\mathbb{R}_+}\int_{\mathbb{R}^d}\varphi^{-\frac{1}{p-1}}\left|\varphi_t\right|^{\frac{p}{p-1}}dx dt<\infty, \\ \int_{\mathbb{R}_+}\int_{\mathbb{R}^d}\varphi^{-\frac{1}{p-1}}\left|\varphi\right|^{\frac{p}{p-1}}dx dt<\infty.
\end{split}\end{equation}
By using the $\varepsilon$-Young inequalities to the right hand side of \eqref{proof11} we have
\begin{align*}
\int_{\mathbb{R}_+}\int_{\mathbb{R}^d}|u||\varphi_t|\,dx\,dt& \leq \varepsilon\int_{\mathbb{R}_+}\int_{\mathbb{R}^d}|u|^p\varphi\,dx\,dt \\&+C(\varepsilon)\int_{\mathbb{R}_+}\int_{\mathbb{R}^d}|\varphi_t|^{\frac{p}{p-1}}\varphi^{-\frac{1}{p-1}}\,dx\,dt
\end{align*}
and
\begin{align*}
\int_{\mathbb{R}_+}\int_{\mathbb{R}^d}|u||\mathcal{L}_{a,b}\varphi|\,dx\,dt& \leq \varepsilon\int_{\mathbb{R}_+}\int_{\mathbb{R}^d}|u|^p\varphi\,dx\,dt\\& +C(\varepsilon)\int_{\mathbb{R}_+}\int_{\mathbb{R}^d}|\mathcal{L}_{a,b}\varphi|^{\frac{p}{p-1}}\varphi^{-\frac{1}{p-1}}\,dx\,dt,
\end{align*} where $C(\varepsilon)>0.$

Choosing $2\varepsilon<1$ and substituting the above calculations into \eqref{proof11} we have
\begin{equation}\label{proof1}\begin{split}
(1-2\varepsilon
)\int_{0}^\infty\int_{\mathbb{R}^d}|u|^p\varphi\,dx\,dt&+\int_{0}^\infty\int_{\mathbb{R}^d}f(x)\varphi\,dx\,dt+\int_{\mathbb R^d}u_0(x)\varphi(x,0)\,dx\\&\leq C\underbrace{\int_{\mathbb{R}_+}\int_{\mathbb{R}^d}|\varphi_t|^{\frac{p}{p-1}}\varphi^{-\frac{1}{p-1}}\,dx\,dt}_{\mathcal{I}_1}\\&+C\underbrace{\int_{\mathbb{R}_+}\int_{\mathbb{R}^d}|\mathcal{L}_{a,b}\varphi|^{\frac{p}{p-1}}\varphi^{-\frac{1}{p-1}}\,dx\,dt}_{\mathcal{I}_2}.\end{split}
\end{equation}
We set the test function as $$\varphi(t,x)=\eta(t)\phi(x),$$
for large enough $R, T,$ where
\begin{equation}\label{T2}
\eta(t)=\nu^l\left(\frac{t}{T}\right),\,l=\frac{2p}{p-1},\,\,\, t>0, \end{equation}
and
\begin{equation}\label{T1}
\phi(x)=\Psi^l\left({\frac{|x|}{{R}}}\right),\,l=\frac{2p}{p-1},\,\, x\in\mathbb{R}^{d}.
\end{equation}
Let $\nu\in C^\infty(\mathbb{R}_+)$ be such that
$\nu\equiv 1$ on $[0,1/2]$; $\nu\equiv 0$ on $[1,\infty),$ and $\Psi\in C^\infty(\mathbb{R}^d)$ be a smooth function  satisfying
\begin{equation}\label{TT}
\Psi(s)=\left\{\begin{array}{l}
1,\,\,\,\,\,\text{if}\,\,0\leq s\leq 1/2,\\
\searrow,\,\,\text{if}\,\,1/2< s<1,\\
0,\,\,\,\,\,\text{if}\,\,s\geq1.\end{array}\right.
\end{equation}
Then using the above properties of function $\varphi$ we have
\begin{equation}\label{00}\begin{split}
\mathcal{I}_1&=\int_{T/2}^T\int_{|x|<R}|\varphi_t|^{\frac{p}{p-1}}\varphi^{-\frac{1}{p-1}}\,dx\,dt\\& =\int_{T/2}^T|\eta'(t)|^{\frac{p}{p-1}}(\eta(t))^{-\frac{1}{p-1}}dt\int_{|x|<R}\phi(x)\,dx\\&\leq R^d \int_{T/2}^T\left|\nu'(t/T)\right|^{\frac{p}{p-1}}(\nu^{l-1}(t/T))^{-\frac{p}{p-1}}(\nu^l(t/T))^{-\frac{1}{p-1}}dt\\&\leq CR^dT^{1-\frac{p}{p-1}}.\end{split}
\end{equation}
Elementary calculations gives us
$$\Delta\phi = lR^{-2}\left[\Psi^{l-1}\left(\frac{|x|}{R}\right)\Delta\Psi\left(\frac{|x|}{R}\right)+(l-1)\Psi^{l-2}\left(\frac{|x|}{R}\right)|\nabla\Psi|^2\left(\frac{|x|}{R}\right)\right].$$
Since the function $G(z) = z^l$ is convex, by Cordoba-Cordoba-type inequality \cite[Lemma\,3.2]{PV1} we have
\begin{equation*}
\begin{split} (-\Delta)^s\phi & = (-\Delta)^s\left(G\circ \left(x\mapsto\Psi\left(\frac{|x|}{R}\right)\right)\right)
  \\& \leq l\Psi^{l-1}\left(\frac{|x|}{R}\right)(-\Delta)^s\left(x\mapsto\Psi\left(\frac{|x|}{R}\right)\right) \\
   & = lR^{-2s}\Psi^{l-1}\left(\frac{|x|}{R}\right)(-\Delta)^s\Psi\left(\frac{|x|}{R}\right).
   \end{split}
  \end{equation*}
As $\phi\in C^\infty_0(\mathbb{R}^d),$ then (see \cite[Formula (3.4)]{PV1}) $$|(-\Delta)^s\phi|\leq CR^{-2s}.$$ Hence,
\begin{align*}
\left|\mathcal{L}_{a,b}\phi\right|^{\frac{p}{p-1}}&=\left|-a\Delta\phi+b(-\Delta)^s\phi\right|^{\frac{p}{p-1}}\\&\leq C\left(\left|\Delta\phi\right|^{\frac{p}{p-1}}+\left|(-\Delta)^s\phi\right|^{\frac{p}{p-1}}\right) \\&\leq C\left(R^{-\frac{2p}{p-1}}+R^{-\frac{2sp}{p-1}}\right)\phi^{\frac{1}{p-1}}.
\end{align*}
Using the above calculations and properties of function $\varphi$ one can get
\begin{align*}
\mathcal{I}_2&=\int_0^T\int_{|x|<R}|\mathcal{L}_{a,b}\varphi|^{\frac{p}{p-1}}\varphi^{-\frac{1}{p-1}}\,dx\,dt\\& =\int_0^T\eta(t)dt\int_{|x|<R}|\mathcal{L}_{a,b}\phi(x)|^{\frac{p}{p-1}}(\phi(x))^{-\frac{1}{p-1}}\,dx\\&\leq CTR^{d-\frac{2p}{p-1}}+CTR^{d-\frac{2sp}{p-1}}.
\end{align*}
Then \eqref{proof1} can be rewritten as
\begin{equation}\label{proof2}\begin{split}
\int_{0}^T\eta(t)\,dt\int_{|x|<R}f(x)\phi(x)\,dx&+\int_{|x|<R}u_0(x)\phi(x)\,dx\\&\leq CR^dT^{-\frac{1}{p-1}}+CTR^{d-\frac{2sp}{p-1}}.\end{split}
\end{equation}
It is obvious that $\int_{0}^T\eta(t)\,dt=CT.$ Hence, choosing $T = R^{2s}$ in \eqref{proof2}, we can verify that
\begin{equation}\label{proof3}\begin{split}
\int_{|x|<R}f(x)\phi(x)\,dx\leq R^{-2s}\int_{|x|<R}|u_0(x)|\,dx+CR^{d-\frac{2sp}{p-1}}.\end{split}
\end{equation} If $p<p_{Crit},$ then $d-\frac{2sp}{p-1}<0.$ Therefore, passing to the limit as $R\to\infty$ in \eqref{proof3} we deduce that $$\int_{\mathbb{R}^d}f(x)\,dx\leq 0,$$ which is a contradiction with $\int_{\mathbb{R}^d}f(x)\,dx>0.$

\subsubsection{The case $p=p_{Crit}$} 
Let $p=\frac{d}{d-2s}$, then \eqref{proof3} can be rewritten in the form
\begin{equation*}\begin{split}
\int_{|x|<{R}}f(x)\phi(x)\,dx\leq R^{-2s}\int_{|x|<R}|u_0(x)|\,dx+C.\end{split}\end{equation*}
Multiplying both side of the last expression by $R^{-\sigma},\,0<\sigma\leq d$ we have
\begin{equation*}\begin{split}
R^{-\sigma}\int_{|x|<{R}}f(x)\phi(x)\,dx\leq R^{-2s-\sigma}\int_{|x|<R}|u_0(x)|\,dx+CR^{-\sigma}.\end{split}\end{equation*}
Passing to the limit as $R\to\infty$ in the last inequality we deduce that $$\limsup_{R\rightarrow\infty}R^{-\sigma}\int_{|x|<R}f(x)\,dx\leq 0,$$ which is a contradiction with $$\limsup_{R\rightarrow\infty}R^{-\sigma}\int_{|x|<R}f(x)\,dx>0,\,\,0<\sigma\leq d.$$

\subsubsection{The case $p>p_{Crit}$} 
For the fixed $p>\frac{d}{d-2s},$ we choose $q$ such that 
    \begin{equation} \label{eq2.16}
        \frac{2s}{dp(p-1)}<\frac{1}{q}< \frac{2s}{d(p-1)}.
    \end{equation}
    Thus it follows that 
    \begin{equation} \label{qnk}
        q>p_{c}^{s} >k \geq 1,
    \end{equation} and for this fixed $q,$ we set 
    $$\rho := \frac{1}{p-1}- \frac{d}{2qs}>0,$$
    which implies that, for $s \in (0, 1]$ we have  
    \begin{equation} \label{eq2.18}
        \rho-\frac{d}{2s} \left( \frac{1}{d}-\frac{1}{q} \right)= (1-s)\rho.
    \end{equation}
    Let $\delta$ be a sufficiently small positive constant. We define the set 
    $$ \Theta:= \left\{ u \in L^\infty((0, \infty), L^q(\mathbb{R}^d)): \sup_{t>0} t^{\rho} \Vert u(t)\Vert_{L^q(\mathbb{R}^d)} \leq \delta  \right\}.$$
    Then space $\Theta$ equipped with the distance 
    $$d(u, v)= \sup_{t>0} t^{\rho} \Vert u(t)-v(t)\Vert_{L^q(\mathbb{R}^d)},\quad u, v \in \Theta,$$
    is a complete metric space. Given $u \in \Theta,$ set 
    \begin{equation} \label{eq2.19}
         (\Lambda u)(t):= e^{-t \mathcal{L}} u_0+\int_0^t e^{-(t-\tau)\mathcal{L}} (|u(\tau)|^p +f) d\tau, \quad t \geq 0.
    \end{equation}
    Since $u_0 \in L^{p_{c}^{s}}(\mathbb{R}^d)$ by \eqref{pqbound} and \eqref{eq2.18}, we have 
   \begin{align} \label{eq2.20}
        \Vert e^{-t \mathcal{L}} u_0 \Vert_{L^q(\mathbb{R}^d)} &\leq C  t^{-\frac{d}{2s}\left( \frac{1}{p_{c}^{s}}-\frac{1}{q} \right)}  \Vert u_0 \Vert_{L^{p_{c}^{s}}(\mathbb{R}^d)} \nonumber \\& = C  t^{- \rho}  \Vert u_0 \Vert_{L^{p_{c}^{s}}(\mathbb{R}^d)}  \, t>0,
   \end{align}
   for some positive constant $C$. Furthermore, applying the estimate \eqref{pqbound} for $(q, \frac{q}{p})$ to get 

   \begin{align} \label{eq2.21}
       \int_0^t \Vert e^{-(t-\tau) \mathcal{L}} |u(\tau)|^p \Vert_{L^q(\mathbb{R}^d)} &\leq C \int_0^t  (t-\tau)^{-\frac{d}{2qs}(p-1)}  \Vert |u(s)|^p \Vert_{L^{\frac{q}{p}}(\mathbb{R}^d)} d\tau
    \nonumber   \\& \leq C \delta^p \int_0^t \tau^{-\rho p}  (t-\tau)^{-\frac{d}{2qs}(p-1)}  d\tau \nonumber \\ & \leq 
    C \delta^p t^{-\frac{d}{2qs}(p-1)+1-\rho p} \nonumber \\& \quad \times   B\left( 1-\rho p, 1-\frac{d}{2qs}(p-1) \right)   \nonumber 
   \\& = C' \delta^p  t^{- \rho}, \quad t>0,
   \end{align}
   where $B$ denotes the beta function. We note $1-\frac{d}{2qs}(p-1)>0$ as $$1-\frac{d}{2qs}(p-1)=\rho(p-1)>0.$$ Therefore, the beta function is well defined.
   \\
   In a similar manner, we obtain that 
   \begin{align} \label{eq2.22}
       \int_0^t \Vert  e^{-(t-\tau)\mathcal{L}} f  \Vert_{L^q(\mathbb{R}^d)} d\tau & \leq C \Vert f \Vert_{L^k(\mathbb{R}^d)} \int_0^t  (t-\tau)^{-\frac{d}{2s}\left( \frac{1}{k}-\frac{1}{q} \right)}  d\tau \nonumber \\ & \leq  t^{-\frac{d}{2s}\left( \frac{1}{k}-\frac{1}{q} \right)+1}  B\left(1, 1-\frac{d}{2s} \left(\frac{1}{k}-\frac{1}{q} \right) \right) \nonumber \\&
      \leq  C'  t^{- \rho} \Vert f\Vert_{L^k(\mathbb{R}^d)}, \quad t>0,
   \end{align}
   where we have used the fact that $\rho-\frac{d}{2s} \left(\frac{1}{k}-\frac{1}{q} \right)+1 =0$ and the beta function is well-defined. Then, by \eqref{eq2.19}, \eqref{eq2.20}, \eqref{eq2.21} and \eqref{eq2.22}, we get
   $$ t^\rho \Vert (\Lambda u)(t) \Vert_{L^q(\mathbb{R}^d)} \leq C^* (\Vert u_0 \Vert_{L^{p_{c}^{s}}(\mathbb{R}^d)}+\delta^p+ \Vert f \Vert_{L^k(\mathbb{R}^d)}),\quad t>0,$$
   where $C^*>0$ is a positive constant, independent of $\delta.$ Thus, by choosing a sufficiently small positive constant $\delta$ satisfying 
   $$0< \delta \leq \left( \frac{1}{2 C^*}\right)^{\frac{1}{p-1}},$$
   and if the initial data $u_0$ and forcing term $f$ satisfy
   $$\Vert u_0 \Vert_{L^{p_{c}^{s}}}(\mathbb{R}^d)+ \Vert f \Vert_{L^{k}}(\mathbb{R}^d) \leq \frac{\delta}{2C^*},$$
   we get
   $$C^* (\Vert u_0 \Vert_{L^{p_{c}^{s}}(\mathbb{R}^d)}+\delta^p+ \Vert f \Vert_{L^k(\mathbb{R}^d)}) \leq \delta.$$
   This shows $\Lambda (\Theta) \subset \Theta.$ By applying a similar argument and choosing $\Vert u_0 \Vert_{L^{p_{c}^{s}}}(\mathbb{R}^d)+ \Vert f \Vert_{L^{k}}(\mathbb{R}^d)$ and $\delta$ small enough, if necessary, we can easily show that $\Lambda: \Theta \rightarrow \Theta$ is a contraction. Therefore, by Banach fixed point theorem, $\Lambda$ admits a fixed point $u \in L^\infty((0, \infty), L^q(\mathbb{R}^{d})),$ which is a mild solution of \eqref{main}.
   
Finally, we will prove that 
\begin{equation} \label{eq2.24}
    u \in C([0, \infty), C_0(\mathbb{R}^d)).
\end{equation}
The first step to prove this is to show that $u \in C([0, T), C_0(\mathbb{R}^d))$ for a sufficiently small time $T>0.$ For this small time $T$, we observe that the above argument gives uniqueness in the space
$$\Theta_T:= \left\{ u \in L^\infty((0, T), L^q(\mathbb{R}^d)):\,\,\, \sup_{0<t \leq T} t^{\rho} \Vert u\Vert_{L^q(\mathbb{R}^d)} \leq \delta \right\}.$$
Denote by $w$ the local solution of \eqref{main} obtained by the Theorem 
   \ref{local}. Using \eqref{qnk}, it follows that $u_0, f \in L^q(\mathbb{R}^d),$ we have $w \in u \in C([0, T_{\max}); L^\infty(\mathbb{R}^d)) \cap C([0, T_{\max}); L^r(\mathbb{R}^d)),$ thanks to Theorem \ref{local} (iii). Then, utilizing the boundedness of $\Vert w \Vert_{L^q(\mathbb{R}^d)},$ for a small enough $T>0,$ we deduce that $\sup_{0<t<T} \Vert w \Vert_{L^q(\mathbb{R}^d)} \leq \delta.$ Therefore, by the uniqueness of solutions, it follows that $w=u$ in $[0, T],$ and so that 
   \begin{equation} \label{eq2.25}
       u \in C([0, T], C_0(\mathbb{R}^d)).
   \end{equation}
   Now, we apply a bootstrap argument to show that $u \in C([ T, \infty), C_0(\mathbb{R}^d)).$ Indeed, for $t>T,$ it holds that 
   \begin{align*}
       u(t)-e^{-t \mathcal{L}} u_0 -\int_0^{t} e^{-(t-\tau) \mathcal{L}} f d\tau &= \int_0^T e^{-(t-\tau) \mathcal{L}} |u(\tau)|^p d\tau + \int_T^t e^{-(t-\tau) \mathcal{L}} |u(\tau)|^p d\tau \\&:= I_1(t)+I_2(t).
   \end{align*}
   Since $u \in C([0, T], C_0(\mathbb{R}^d)),$ it follows that  $I_1(t) \in C([T, \infty), C_0(\mathbb{R}^d)).$ Also, by calculations use to construct the fixed point $I_1 \in C([T, \infty), L^q(\mathbb{R}^d)).$ Next, note that $q> \frac{d(p-1)}{2s}$ by \eqref{eq2.16}. Therefore, there exists $q <r \leq \infty$ such that 
   $$\frac{d}{2s}\left(\frac{p}{q}-\frac{1}{r} \right)<1.$$
   Since $u \in L^\infty((0, \infty), L^q(\mathbb{R}^d)),$ for $\Tilde{T}>T,$ we know that $|u|^p \in L^\infty((0, \infty), L^{\frac{q}{p}}(\mathbb{R}^d)),$ and it easily follows that $I_2 \in C([T, \Tilde{T}], L^r(\mathbb{R}^d)).$ Since $\Tilde{T}$ is arbitrary, we get that $I_2 \in C([T, \infty),L^r(\mathbb{R}^d) ).$ Since the terms $e^{-t \mathcal{L}}u_0,$ $\int_0^t e^{-t \mathcal{L}} f d\tau $ and $I_1$ belong to $C([T, \infty), C_0(\mathbb{R}^d)) \cap C([T, \infty), L^q(\mathbb{R}^d),$ we deduce that $u \in C([T, \infty), L^r(\mathbb{R}^d)).$ Irerating this procdess a finite number of times, we obtain
   \begin{equation} \label{eq2.26}
       u \in C([T, \infty), C_0(\mathbb{R}^d)).
   \end{equation}
   Therefore, by combining \eqref{eq2.25} and \eqref{eq2.26}, we conclude the proof of our claim \eqref{eq2.24} and hence, completing the proof of the theorem. 

\subsection{Proof of Theorem \ref{thm:Main}}\subsubsection{The case $p<p_{F}$ and $\int_{\mathbb{R}^d}u_0(x)\,dx>0$} The proof is carried out in the same way as the case with $f(x)\not\equiv 0$ (see Subsection \ref{SubsectionP1}). Let us rewrite inequality \eqref{proof2} for $f(x)\equiv 0:$
\begin{equation*}\begin{split}
\int_{|x|<R}u_0(x)\phi(x)\,dx&\leq CR^dT^{-\frac{1}{p-1}}+CTR^{d-\frac{2sp}{p-1}}.\end{split}
\end{equation*}
Then, choosing $T = R^{2s}$, we have that
\begin{equation}\label{proof4}
\int_{|x|<R}u_0(x)\phi(x)\,dx\leq CR^{d-\frac{2s}{p-1}}.
\end{equation} 
If $p<p_{F},$ then $d-\frac{2s}{p-1}<0.$ Hence passing to the limit as $R\to\infty$ in \eqref{proof4} we deduce that $$\int_{\mathbb{R}^d}u_0(x)\,dx\leq 0,$$ which is a contradiction with $\int_{\mathbb{R}^d}u_0(x)\,dx>0.$

\subsubsection{The case $p<p_{F}$ and $\int_{\mathbb{R}^d}u_0(x)\,dx=0,\,u_0\not\equiv 0$} Let us rewrite inequality \eqref{proof2} for $T = R^{2s}$ and $f(x)\equiv 0:$
\begin{equation}\label{CC}\begin{split}
(1-2\varepsilon
)\int_{0}^\infty\int_{\mathbb{R}^d}|u|^p\varphi\,dx\,dt+\int_{|x|<R}u_0(x)\phi(x)\,dx&\leq CR^{d-\frac{2s}{p-1}}.\end{split}
\end{equation}
As $p<p_{F},$ then $d-\frac{2s}{p-1}<0.$ Hence, passing to the limit as $R\to\infty$ in the inequality above we have that \begin{equation*}\begin{split}
&(1-2\varepsilon
)\int_{0}^\infty\int_{\mathbb{R}^d}|u|^p\,dx\,dt+\int_{\mathbb{R}^d}u_0(x)\,dx\\&=(1-2\varepsilon
)\int_{0}^\infty\int_{\mathbb{R}^d}|u|^p\,dx\,dt\leq 0,\end{split}
\end{equation*} thanks to $$\int_{\mathbb{R}^d}u_0(x)\,dx=0.$$ Therefore, $u\equiv 0,\,\, a. e.\,\,(x,t)\in\mathbb{R}^d\times\mathbb{R}_+. $ This means that there is no global solution other than $u=0$.

\subsubsection{The case $p=p_{F}$} 
Let $p=p_F,$ then from \eqref{CC} follows that
\begin{align*}
\int_{0}^{T}\int_{|x|<R}|u|^p\varphi\,dx\,dt+\int_{|x|<R}u_0(x)\phi(x)\,dx&\leq C.
\end{align*} Hence, passing to the limit as $T\to\infty$ and $R\to\infty$ in the last inequality we have that
\begin{align*}
\int_{0}^{\infty}\int_{\mathbb{R}^d}|u|^p\,dx\,dt+\int_{\mathbb{R}^d}u_0(x)\,dx&\leq C.
\end{align*} 
As $\int_{\mathbb{R}^d}u_0(x)\,dx\geq 0,$ we conclude that $u\in L^p(\mathbb{R}_+\times\mathbb{R}^d).$

Using the H\"{o}lder's inequality to the right hand side of \eqref{proof1} we have
\begin{equation*}\begin{split}&
\int_{0}^{T/2}\int_{|x|<R/2}|u|^p\,dx\,dt+\int_{|x|<R}u_0(x)\phi(x)\,dx\\&\leq \mathcal{I}_1^{\frac{p-1}{p}}\left(\int_{T/2}^T\int_{|x|<R}|u|^p\,dx\,dt\right)^{\frac{1}{p}}+\mathcal{I}_2^{\frac{p-1}{p}}\left(\int_0^T\int_{|x|<R}|u|^p\,dx\,dt\right)^{\frac{1}{p}}.\end{split}
  \end{equation*}
Let $R=\Theta\mathcal{R},$ where $\Theta\in (1,\mathcal{R})$ and $\mathcal{R}$ is a large number. Since $p=p_F=1+\frac{2s}{d}$, setting estimates of the integrals $\mathcal{I}_1,$ $\mathcal{I}_2:$
\begin{equation*}\begin{split}
\mathcal{I}_1\leq C\Theta^d\mathcal{R}^dT^{1-\frac{p}{p-1}}\,\,\,\text{and}\,\,\,\mathcal{I}_2\leq CT\mathcal{R}^{d-\frac{2sp}{p-1}}\Theta^{d-\frac{2sp}{p-1}},\end{split}
\end{equation*}
from Subsubsection \ref{SSS1} we obtain
\begin{equation*}\begin{split}&
\int_{0}^{T/2}\int_{|x|<\Theta\mathcal{R}/2}|u|^p\,dx\,dt+\int_{|x|<\Theta\mathcal{R}}u_0(x)\phi(x)\,dx\\&\leq C(\Theta\mathcal{R})^{d\frac{p-1}{p}}T^{-\frac{1}{p}}\left(\int_{T/2}^T\int_{|x|<\Theta\mathcal{R}}|u|^p\,dx\,dt\right)^{\frac{1}{p}}\\&+CT^{\frac{p-1}{p}}(\Theta\mathcal{R})^{d\frac{p-1}{p}-2s}\left(\int_0^T\int_{|x|<\Theta\mathcal{R}}|u|^p\,dx\,dt\right)^{\frac{1}{p}}\\&\leq C(\Theta\mathcal{R})^{\frac{2ds}{d+2s}}T^{-\frac{d}{d+2s}}\left(\int_{T/2}^T\int_{|x|<\Theta\mathcal{R}}|u|^p\,dx\,dt\right)^{\frac{1}{p}}\\&+CT^{\frac{2s}{d+2s}}(\Theta\mathcal{R})^{\frac{2ds}{d+2s}-2s}\left(\int_0^T\int_{|x|<\Theta\mathcal{R}}|u|^p\,dx\,dt\right)^{\frac{1}{p}}.\end{split}
  \end{equation*}
Choosing $T=\mathcal{R}^{2s}$ we obtain
\begin{equation}\label{Aa}\begin{split}&
\int_{0}^{\mathcal{R}^s/2}\int_{|x|<\Theta\mathcal{R}/2}|u|^p\,dx\,dt +\int_{|x|<\Theta\mathcal{R}}u_0(x)\phi(x)\,dx\\&\leq C\Theta^{\frac{ds}{d+2s}}\left(\int_{{\mathcal{R}^s}/2}^{\mathcal{R}^s}\int_{|x|<\Theta\mathcal{R}}|u|^p\,dx\,dt\right)^{\frac{1}{p}}\\&+C\Theta^{-\frac{4s^2}{d+2s}}\left(\int_0^{\mathcal{R}^s}\int_{|x|<\Theta\mathcal{R}}|u|^p\,dx\,dt\right)^{\frac{1}{p}}.\end{split}
  \end{equation}
Since $u\in L_{loc}^p(\mathbb{R}_+\times \mathbb{R}^d)$, we obtain
\begin{equation*}\begin{split}&
\lim\limits_{\mathcal{R}\rightarrow \infty}\left(\int_{\mathcal{R}^{2s}/2}^{\mathcal{R}^{2s}}\int_{|x|<\Theta\mathcal{R}}|u|^p\,dx\,dt\right)^{\frac{1}{p}}\\&=\lim\limits_{\mathcal{R}\rightarrow \infty}\left(\int_{0}^{\mathcal{R}^{2s}}\int_{|x|<\Theta\mathcal{R}}|u|^p\,dx\,dt-\int_{0}^{\mathcal{R}^{2s}/2}\int_{|x|<\Theta\mathcal{R}}|u|^p\,dx\,dt\right)^{\frac{1}{p}}\\&=0.\end{split}
\end{equation*} 
As $$\int_{\mathbb{R}^d}u_0(x)\,dx> 0,$$ or $$\int_{\mathbb{R}^d}u_0(x)\,dx= 0,$$ letting $\mathcal{R}\rightarrow \infty$ and then $\Theta\rightarrow \infty$ we have that
$$\int_{0}^{\infty}\int_{\mathbb{R}^d}|u|^p\,dx\,dt\leq 0,$$ hence $u\equiv 0,$ which is a contradiction.
  \subsubsection{The case $p>p_{F}$} The proof of this part follows verbatim in the same manner as Part (ii) of Theorem \ref{thm:Main} with $f(x)\equiv 0.$
This complete the proof of this theorem. 

\subsection{Proof of Theorem \ref{thm:Main3}}
Assume that $p>1$ and let us rewritte \eqref{proof2} in the form
\begin{equation*}\begin{split}
R^{-\sigma}\int_{|x|<R}f(x)\phi(x)\,dx&\leq T^{-1}R^{-\sigma}\int_{|x|<R}u_0(x)\,dx\\&+ CR^{d-\sigma}T^{-\frac{p}{p-1}}+CR^{d-\sigma-\frac{2sp}{p-1}}.\end{split}
\end{equation*}
Then, passing to the limit as $R \rightarrow \infty$ in the last inequality, and taking into account assumption of Theorem \ref{thm:Main3}
$$\limsup_{R\rightarrow\infty}R^{-\sigma}\int_{|x|<R}f(x)\,dx>0,\,\sigma>d,$$
one deduces that
\begin{equation*}\begin{split}
\limsup_{R\rightarrow\infty}R^{-\sigma}\int_{|x|<R}f(x)\,dx&\leq 0,\end{split}
\end{equation*}
for any $T>0$, which is a contradiction. This means that there is no weak solution to problem \eqref{main} on $(0,T)\times\mathbb{R}^d.$ The proof is complete.

\section*{Declaration of competing interest}
	The Authors declare that there is no conflict of interest

\section*{Data Availability Statements} The manuscript has no associated data.


\begin{thebibliography}{999}
\bibitem{BLZ00} C. Bandle, H.A. Levine, Q. Zhang, Critical exponents of Fujita type for inhomogeneous parabolic equations and systems,
{\it J. Math. Anal. Appl.}, 251 (2000) 624--648.

\bibitem{Biagi} S. Biagi, F. Punzo, and E. Vecchi. Global solutions to semilinear parabolic equations driven by mixed
local-nonlocal operators. {\it Bull. London Math. Soc.}, 57 (2025), 265--284.
\bibitem{Torebek1} M. Borikhanov, B. Torebek, On inhomogeneous exterior {R}obin problems with critical nonlinearities, {\it J. Differential Equations}, 380 (2024) 1--23.

\bibitem{CDW08}  T. Cazenave, F. Dickstein, F.B. Weissler, An equation whose Fujita critical exponent is not given by scaling, {\it  Nonlinear Anal}, 68 (2008) 862--874.

\bibitem{Chen1} Z.-Q. Chen, P. Kim, R. Song, Global heat kernel estimates for $\Delta+\Delta^{\alpha/2}$ in half-space-like domains, {\it Electron. J. Probab.}, 17 (32) (2012) 32.
\bibitem{Chen2} Z.-Q. Chen, E. Hu, Heat kernel estimates for $\Delta+\Delta^{\alpha/2}$ under gradient perturbation, {\it Stoch. Process. Appl.}, 125 (2015) 2603--2642.

\bibitem{Pezzo} L. Del Pezzo and R. Ferreira. Fujita exponent and blow-up rate for a mixed local and nonlocal heat
equation. {\it Nonlinear Analysis} 255 (2025): 113761.

\bibitem{Dip1} S. Dipierro and E. Valdinoci. Description of an ecological niche for a mixed local/nonlocal dispersal: An evolution equation and a new Neumann condition arising from the superposition of Brownian and Lévy processes. {\it Physica A: Stat. Mech. Appl.}, 575 (2021), 126052.
\bibitem{Dip2} S. Dipierro, E. P. Lippi, and E. Valdinoci, (Non)local logistic equations with Neumann conditions, {\it Ann. Inst. H. Poincaré C Anal. Non Linéaire}, 40 (2023), 1093--1166.
\bibitem{Dip3} S. Dipierro, E. P. Lippi, and E. Valdinoci, The role of Allee effects for Gaussian and Lévy dispersals in an environmental niche, {\it J. Math. Biol.}, 89 (2024), 19.
\bibitem{Dip4} S. Dipierro, E. P. Lippi, and E. Valdinoci, Some maximum principles for parabolic mixed local/nonlocal operators, {\it Proc. Amer. Math. Soc.}, 152 (2024), 3923--3939.
\bibitem{Falc} K. Falconer, J. Hu, Y. Sun, Inhomogeneous parabolic equations on unbounded metric measure spaces. {\it Proc. Roy. Soc. Edinb. Sect. A}, 142 (2012), 1003--1025.
\bibitem{Filip} C. De Filippis, G. Mingione, Gradient regularity in mixed local and nonlocal problems, {\it Math. Ann.}, 388 (2024) 261--328.
\bibitem{Fino} A. Z. Fino, M. Kirane, Qualitative properties of solutions to a time-space fractional evolution equation, {\it Quart. Appl. Math.}, 70 (2012) 133--157.
\bibitem{Fujita} H. Fujita. On the blowing up of solutions of the Cauchy problem for $u_t=\triangle u+u^{1+\alpha}$. {\it J. Fac. Sci. Univ. Tokyo Sect.} 13 (1966), 109--124.
\bibitem{Kirane1} M. Guedda, M. Kirane, A note on nonexistence of global solutions to a nonlinear integral equation, {\it Bull. Belg. Math. Soc. Simon Stevin}, 6 (1999), 491--497.
\bibitem{Hayakawa} K. Hayakawa, On nonexistence of global solutions of some semilinear parabolic differential equations, {\it Proc. Japan Acad.}, 49 (1973) 503--505.
\bibitem{Jleli} M. Jleli, T. Kawakami, B. Samet, Critical behavior for a semilinear parabolic equation with forcing term depending on time and space, {\it J. Math. Anal. Appl.}, 486 (2020) 231.
\bibitem{KFA25} M. Kirane, A. Z. Fino, and A. Ayoub. Decay of mass for a semilinear heat equation with mixed local and nonlocal operators. ArXiv, (2025),   	
https://doi.org/10.48550/arXiv.2501.02827.

\bibitem{Majdoub} M. Majdoub, On the Fujita Exponent for a Hardy–Hénon Equation with a Spatial-Temporal Forcing Term. {\it La Matematica}, 2 (2023), 340--361.
\bibitem{Naga} M. Nagasawa, T. Sirao, Probabilistic treatment of the blowing up of solutions for a non-linear integral equation, {\it Trans. Am. Math. Soc.}, 139 (1969) 301--310.
\bibitem{Pinsky1} R. G. Pinsky, Finite time blow-up for the inhomogeneous equation $u_t=\Delta u+a(x)u^p+\lambda\varphi$ in $\mathbb R^d$, {\it Proc. Amer. Math. Soc.}, 127 (1999), 3319--3327.
\bibitem{Pinsky} R. G. Pinsky, Global existence/nonexistence of sign-changing solutions to $u_t=\Delta u+|u|^p$ in $\mathbb R^d$, {\it Bull. Lond. Math. Soc.}, 37 (2005) 417--426.
\bibitem{Poho} S. I. Pokhozhaev, Nonexistence of global solutions of nonlinear evolution equations, {\it Differ. Equ.}, 49 (2013), 599--606.
\bibitem{PV1} F. Punzo, and E. Valdinoci, Uniqueness in weighted Lebesgue spaces for a class of fractional parabolic and
elliptic equations, {\it J. Differential Equations}, 258 (2015), 555--587.

\bibitem{QS07} P. Quittner and P. Souplet, {\it Superlinear parabolic problems}, Birkh\"auser Advanced Texts: Basler Lehrb\"ucher, Birkh\"auser, Basel, Second Edition, 2019.


\bibitem{Sugitani}
S. Sugitani, On nonexistence of global solutions for some nonlinear integral equations, {\it Osaka Math. J.}, 12 (1975) 45--51.

\bibitem{Torebek2} B. T. Torebek, Critical exponents for the $p$-Laplace heat equations with combined nonlinearities, {\it J. Evol. Equ.}, 23 (2023), 1--15.
\bibitem{Wang} X. Wang, J. Zhang, Non-existence of positive solutions to nonlocal Lane-Emden equations, {\it J. Math. Anal. Appl.}, 488 (1) (2020) 124067.
\bibitem{Wes81}  F.B. Weissler, Existence and nonexistence of global solutions for a semilinear heat equation, {\it Isr. J. Math.} 38 (1981) 29--40.
\bibitem{Zhang1} Q. S. Zhang, A new critical phenomenon for semilinear parabolic problem, {\it J. Math.
Anal. Appl.}, 219 (1998), 123--139.
\bibitem{Zhang3} Q. S. Zhang, Blow up results for nonlinear parabolic problems on manifolds, {\it Duke
Math. J.}, 97 (1999), 515--539.
\bibitem{Zhang4} Q. S. Zhang, An optimal parabolic estimate and its applications in prescribing scalar curvature on some open manifolds with $Ricci \geq 0,$ {\it Math. Ann.}, 316 (2000), 703--731.
\bibitem{Zhang} Q. S. Zhang, A critical behavior for some semilinear parabolic equations involving sign changing solutions, {\it Nonlinear Anal.}, 50 (2002) 967--980.
\end{thebibliography}
\end{document}